\newtheorem{Theorem}{Theorem}
\newtheorem{Definition}[Theorem]{Definition}
\newtheorem{Remark}[Theorem]{Remark}
\newtheorem{Proposition}[Theorem]{Proposition}
\newtheorem{Lemma}[Theorem]{Lemma}
\newtheorem{Corollary}[Theorem]{Corollary}
\newcommand{\F}{F\langle X\rangle}
\DeclareMathOperator{\Id}{Id}
\DeclareMathOperator{\V}{var}
\begin{document}
\title[Codimensions of algebras with pseudoautomorphism and their exponential growth]{Codimensions of algebras with pseudoautomorphism \\ and their exponential growth}

\author{Elena Campedel}
\address{Dipartimento di Ingegneria e Scienze dell'Informazione e Matematica, Universit\`a degli Studi dell'Aquila, Via Vetoio 1, 67100, L'Aquila, Italy}
\email{elena.campedel@univaq.it}

\author{Ginevra Giordani}
\address{Dipartimento di Ingegneria e Scienze dell'Informazione e Matematica, Universit\`a degli Studi dell'Aquila, Via Vetoio 1, 67100, L'Aquila, Italy}
\email{ginevra.giordani@graduate.univaq.it}

\author{Antonio Ioppolo}
\address{Dipartimento di Ingegneria e Scienze dell'Informazione e Matematica, Universit\`a degli Studi dell'Aquila, Via Vetoio 1, 67100, L'Aquila, Italy}
\email{antonio.ioppolo@univaq.it}

\thanks{The authors were supported by GNSAGA-INDAM}

\subjclass[2020]{Primary 16R10, 16R50, Secondary 16W10, 16W50}

\keywords{Polynomial identity, Pseudoautomorphism, Codimension, Growth, Exponent}

\begin{abstract}
Let $F$ be a fixed field of characteristic zero containing an element $i$ such that $i^2 = -1$. In this paper we consider finite dimensional superalgebras over $F$ endowed with a pseudoautomorphism $p$ and we investigate the asymptotic behaviour of the corresponding sequence of  $p$-codimensions $c_n^p(A),$ $n=1,2, \ldots$. 
First we give a positive answer to a conjecture of Amitsur in this setting: the $p$-exponent $\exp^p(A) = \lim_{n \rightarrow \infty} \sqrt[n]{c_n^p(A)} $ always exists and it is an integer. In the final part we characterize the algebras whose exponential growth is bounded by $2$.
\end{abstract}

\maketitle

\section{\bf{Introduction}}

Let $F$ be a fixed field of characteristic zero containing an element $i$ such that $i^2 = -1$ and consider an associative superalgebra $A = A_0 \oplus A_1$ over $F$. A graded linear map $p$ is a pseudoautomorphism on $A$ if, for any homogeneous elements $a, b \in A_0\cup A_1$, 
$$
p^2(a) = (-1)^{\vert a \vert}a \ \ \ \ \ \mbox{ and } \ \ \ \ \ 
p(ab)=(-1)^{\vert a \vert \vert b \vert} p(a) p(b).
$$

In \cite{Ioppolo2022}, the author proved that pseudoautomorphisms represent the connection link between graded involutions, superinvolutions and pseudoinvolutions; such maps play a prominent role in the setting of Lie and Jordan superalgebras (see for instance \cite{Kac1977, MartinezZelmanov2010, RacineZelmanov2003}) and they have been extensively studied within the theory of polynomial identities (see \cite{CentroneEstradaIoppolo2022, IoppoloMartino2022} and the references therein). 

It is well-known that the study of the polynomial identities satisfied by an ordinary algebra $A$ (with no additional structure) is equivalent to the study of the multilinear ones and an effective way to measure such identities is through the sequence of codimensions $c_n(A),$ $n=1, 2, \ldots,$ of $A$. Recall that if $P_n$ is the space of multilinear polynomials in the non-commuting variables $x_1, \ldots, x_n$ and $\Id(A)$ is the ideal of identities of $A,$ then $c_n(A)=\dim P_n/(P_n\cap \Id(A))$. The asymptotic behavior of this sequence has been extensively
studied  leading to classification results of several
varieties of algebras. The key result in this area says that the
sequence of codimensions of an algebra satisfying a
non-trivial polynomial identity is exponentially bounded (\cite{Regev1972}) and its exponential rate of growth is an integer (\cite{GiambrunoZaicev1998, GiambrunoZaicev1999}).

Let $A$ be a superalgebra with pseudoautomorphism. In this paper we study the $p$-polynomial identities satisfied by $A$ and we investigate the asymptotic behaviour of the corresponding sequence $c_n^p(A)$ of $p$-codimensions. Notice that such a sequence is bounded from above by $4^n  n!$. Nevertheless when $A$ satisfies an ordinary (non trivial) identity, $c_n^p(A)$ is exponentially bounded (see \cite{Giordani}).

Now assume that $A$ has finite dimension over the field $F$. In the first part of the paper we determine the exponential rate of growth of the sequence of
$p$-codimensions, showing that 
$$
\exp^p(A) = \lim_{n \rightarrow \infty} \sqrt[n]{c_n^p(A)} 
$$
exists and it is a non-negative integer, called the $p$-exponent of $A$.
Moreover $\exp^p(A)$ can be explicitly computed and it turns out to be equal to the dimension of a suitable semisimple $p$-subalgebra of $A$.

The last part of the paper is devoted to the characterization of those algebras whose $p$-exponent is bounded by $2$ (see also \cite{GiambrunoIoppoloLaMattina2022, GiambrunoZaicev2000, Ioppolo2021, IoppoloMartino2019}). If the $p$-exponent of an algebra $A$ is bounded by $1,$ it is equivalent to say that the $p$-codimensions are polynomially bounded and   that the variety generated by $A$ does not contain the group algebra of $\mathbb{Z}_2$ and the algebra of $2\times 2$ upper triangular matrices with suitable pseudoautomorphisms (see \cite{Giordani}). These are the only algebras generating minimal varieties of $p$-exponent $2.$ 

Finally, new results concerning  $p$-algebras generating varieties  of minimal $p$-exponent $>2$ will be obtained. 

It is important to highlight that the starting point in the proof of all results of this paper is the Wedderburn-Malcev decomposition of a finite dimensional $p$-algebra based on the classification of the simple ones given in \cite{IoppoloLaMattina}.

\section{\bf{Superalgebras with pseudoautomorphism}}

Throughout this paper $F$ will denote a field of characteristic zero containing an element $i$ such that $i^2 = -1$ and $A=A_{0}\oplus A_{1}$ an associative superalgebra (an algebra graded by $\mathbb{Z}_2$, the cyclic group of order $2$) over $F$ endowed with a pseudoautomorphism $p$. We say that a linear map $p \colon A \rightarrow A$ is a pseudoautomorphism if it preserves the grading (graded map) and for any elements $a, b \in A_0\cup A_1$,
$$
p^2(a) = (-1)^{\vert a \vert}a \ \ \ \ \ \mbox{ and } \ \ \ \ \ 
p(ab)=(-1)^{\vert a \vert \vert b \vert} p(a) p(b).
$$
 Here $\vert c \vert=0$ or $1$ denotes the homogeneous degree of $c \in A_0\cup A_1.$ 

If $A$ is a superalgebra with pseudoautomorphism we shall call it simply a $p$-algebra.

In case $A$ is a finite dimensional algebra, its structure is known from a generalization of Wedderburn-Malcev's theorem proved in \cite[Theorem 3]{Giordani}. Before stating it, recall that an ideal (subalgebra) $I$ of $A$ is a $p$-ideal (subalgebra) of $A$ if it is a graded ideal (subalgebra) and $p(I)=I$. The algebra $A$ is a simple $p$-algebra if $A^2\neq 0$ and $A$ has no non-trivial $p$-ideals.

\begin{Theorem}  \label{WM}
Let $A$ be a finite dimensional $p$-algebra. Then there exists a semisimple $p$-subalgebra $B$ such that
$$
A=B+J = B_1 \oplus \cdots \oplus B_k + J,
$$
where $J,$ the Jacobson radical of $A,$ is  a $p$-ideal of $A$ and $B_1,  \ldots, B_k$ are simple $p$-algebras.
\end{Theorem}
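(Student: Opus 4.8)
The plan is to deduce the statement from the classical Wedderburn--Malcev theorem after repackaging the $p$-structure as a finite group action. Since $i \in F$, for a homogeneous element $a$ put $\tilde p(a) = i^{|a|} p(a)$ and extend by linearity. Using $p^2(a) = (-1)^{|a|}a$, the fact that $p$ preserves homogeneous degrees, and the twisted multiplicativity of $p$, one checks directly that $\tilde p$ is a grading-preserving algebra automorphism of $A$ with $\tilde p^2 = \mathrm{id}$, and conversely that $p(a) = (-i)^{|a|}\tilde p(a)$, so that all defining relations of a pseudoautomorphism hold automatically once $\tilde p$ is given. Writing $g\colon a \mapsto (-1)^{|a|}a$ for the grading automorphism, this identifies a $p$-algebra with an associative algebra equipped with an action of the Klein four group $G = \langle g\rangle \times \langle \tilde p\rangle$ by automorphisms, under which a $p$-ideal (respectively, $p$-subalgebra) is exactly a $G$-invariant ideal (respectively, subalgebra). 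Note $|G| = 4$ is invertible in $F$.

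Next I would check that $J$ is a $p$-ideal. As the largest nilpotent ideal of the finite dimensional algebra $A$, the radical $J$ is invariant under every automorphism, so $g(J) = \tilde p(J) = J$; invariance under $g$ together with $\mathrm{char}\,F \neq 2$ gives $J = (J \cap A_0) \oplus (J \cap A_1)$, hence $J$ is graded, and then $p(J) = J$. Consequently $\bar A := A/J$ is a finite dimensional semisimple $p$-algebra. By ordinary Wedderburn theory $\bar A$ is a finite direct sum of simple ideals; the group $G$ permutes these ideals, and collecting them into $G$-orbits (equivalently, into minimal $p$-ideals of $\bar A$) yields $\bar A = \bar B_1 \oplus \cdots \oplus \bar B_k$ with each $\bar B_\ell$ a simple $p$-algebra.

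It remains to lift $\bar A$ back inside $A$: to produce a $G$-invariant subalgebra $B \subseteq A$ mapped isomorphically onto $\bar A$ by the canonical projection. Here I would run the cohomological proof of Wedderburn--Malcev $G$-equivariantly. Since $\mathrm{char}\,F = 0$, the semisimple algebra $\bar A$ is separable, so its Hochschild cohomology with coefficients in the $\bar A$-bimodule $J$ vanishes in degrees $1$ and $2$; degree $2$ vanishing produces a semisimple complement $B_0$, and degree $1$ vanishing controls the ambiguity, namely two complements are conjugate by an inner automorphism $x \mapsto (1+j)x(1+j)^{-1}$ with $j \in J$. To make the complement $G$-invariant one averages the relevant cochains over $G$, which is legitimate because $|G|$ is invertible, obtaining the desired $G$-invariant, i.e.\ $p$-, subalgebra $B$. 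Finally $A = B \oplus J$ as vector spaces, and from $B \cap J = 0$ together with $\bar B_\ell \bar B_m = 0$ for $\ell \neq m$ one gets $B = B_1 \oplus \cdots \oplus B_k$ with $B_\ell \cong \bar B_\ell$ a simple $p$-algebra, as claimed. Equivalently, this last paragraph is an instance of the Wedderburn--Malcev theorem for module algebras over the semisimple group algebra $FG$.

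I expect this lifting step to be the only real obstacle: a complement that is simultaneously a subalgebra, graded, and $p$-stable cannot in general be built by imposing the three conditions one after another, so the point is precisely the simultaneous $G$-equivariant correction. One could also argue directly with the pair (grading, $p$), bypassing the auxiliary automorphism $\tilde p$, but then the sign bookkeeping becomes considerably heavier.
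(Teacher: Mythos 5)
Your proposal is correct, and it is essentially the standard argument: the paper does not prove Theorem \ref{WM} itself but cites \cite[Theorem 3]{Giordani}, where the proof runs exactly along your lines --- trade $p$ for the graded automorphism $\tilde p(a)=i^{|a|}p(a)$ of order $\le 2$, so that the $p$-structure becomes an action of a group of order dividing $4$, note that $J$ is automatically $G$-invariant, and invoke the $G$-equivariant Wedderburn--Malcev theorem (Taft's invariant Wedderburn factors, valid since $|G|$ is invertible in $F$). The only cosmetic imprecision is that $J$ is an $\bar A$-bimodule only when $J^2=0$; the cohomological lifting is carried out by the usual induction on the nilpotency index of $J$, with the $G$-averaging you describe performed at each step.
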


Since the classification of the simple $p$-algebras is known, the above result can be further refined. 

To this end, consider the following simple superalgebras:
\begin{itemize}
\item[-] $ Q(n) = M_n(F) \oplus c M_n(F)$, where $M_n(F)$ is the algebra of $n \times n$ matrices over $F$ and $c^2 = 1;$
\vspace{0.1 cm}
\item[-] $M_{k,h}(F)$, the algebra of $n \times n$ matrices, $n = k+h$, $ k \geq h \geq 0$, with the following $\mathbb{Z}_2$-grading
$$
M_{k,h}(F) = \left \{ \begin{pmatrix}
K & 0 \\
0 & H
\end{pmatrix}
  \mid  K \in M_k(F), \ H \in M_h(F) \right \} \oplus
\left \{ \begin{pmatrix}
0 & R \\
S & 0
\end{pmatrix}
  \mid  R \in M_{k \times h}(F), \ S \in M_{h \times k}(F) \right \}. 
$$ 
\end{itemize}

Given a superalgebra $B$, let  $\bar{B}$ denote the superalgebra with the same graded vector space structure as $B$ and product $\circ$ given on homogeneous elements $a,b$ by the formula
$$
a \circ b := (-1)^{\vert a \vert \vert b \vert} ab.
$$

Two $p$-algebras $(A, p)$ and $(A', p')$ are isomorphic if there exists an isomorphism of superalgebras  $\tau \colon A \rightarrow A'$ such that $\tau(p(a))=p'(\tau(a)),$ for any $a\in A$.

Now we are ready to state the classification theorem of the finite dimensional simple $p$-algebras (see \cite{IoppoloLaMattina}). 

\begin{Theorem}  \label{phi super simple algebras}
Assume that the field $F$ is also algebraically closed. A finite dimensional $p$-simple superalgebra $A$ over $F$ is isomorphic to one of the following:
\begin{enumerate}

\item $M_{k,h}(F)$ endowed with the pseudoautomorphism
			\[p \left(\begin{pmatrix}
				K & R \\
				S & H \\
			\end{pmatrix} \right)=\begin{pmatrix}
				PKP & \pm iPRQ \\
				\pm iQSP & QHQ \\
			\end{pmatrix},\] 
 $P=\begin{pmatrix}
				I_{k_1} & 0 \\
				0 & -I_{k_2} \\
			\end{pmatrix}, Q=\begin{pmatrix}
				I_{h_1} & 0 \\
				0 & -I_{h_2} \\
			\end{pmatrix}$, $I_j$'s are identity matrices, $k=k_1+k_2$, $h=h_1+h_2$, $k_1 \ge k_2$, $h_1 \ge h_2$.
\vspace{0.1 cm}

\item $M_{k,k}(F)$ endowed with the pseudoautomorphism $p$ given by
			$\displaystyle p \left( \begin{pmatrix}
				K & R \\
				S & H \\
			\end{pmatrix} \right)=\begin{pmatrix}
				H & iS \\
				iR & K \\
			\end{pmatrix}$. 

\vspace{0.1 cm}
			
\item $M_{k,h}(F)\oplus\overline{M_{k,h}(F)}$ with the pseudoautomorphism $pex$ given by
$\displaystyle pex (a,b) = \left((-1)^{\vert(a,b) \vert}b,a \right)$.

\vspace{0.1 cm}
		
\item $Q(n) \oplus \overline{Q(n)}$, with the pseudoautomorphism $pex$ defined above.
\vspace{0.25 cm}
					
\item $Q(n)$ endowed with the following pseudoautomorphism
$$
p(a+cb) = f(a) \pm icf(b),
$$
where $f$ is an automorphism of order $\le 2$ on $M_n(F)$.

\end{enumerate}
\end{Theorem}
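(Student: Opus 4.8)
The plan is to combine the structure theory of finite-dimensional semisimple superalgebras with an ``untwisting'' trick that, using $i\in F$, turns a pseudoautomorphism into an ordinary graded automorphism of order at most $2$, and then to classify those automorphisms up to conjugacy.

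\emph{Reduction to the semisimple case, with at most two blocks.} First I would check that the Jacobson radical $J$ of $A$ is a $p$-ideal: it is a graded ideal, and since $p$ sends products to products up to a sign, $p(J)$ is again a nilpotent graded ideal, hence $p(J)\subseteq J$, and then $p(J)=J$ because $p^2(a)=(-1)^{|a|}a$. As $A^2\neq 0$, $p$-simplicity forces $J=0$, so $A=S_1\oplus\cdots\oplus S_m$ is a direct sum of finite-dimensional simple superalgebras. The same argument shows that $p$ permutes the minimal graded ideals $S_1,\dots,S_m$, and this permutation has order at most $2$ since $p^2$ fixes each homogeneous component; as every orbit is a $p$-ideal, $p$-simplicity leaves exactly two cases: (A) $m=1$, i.e.\ $A=S$ is simple as a superalgebra, and (B) $m=2$ and $p$ interchanges the two simple factors.

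\emph{The exchange case (B).} Writing $p(a,0)=(0,\phi(a))$, $p(0,b)=(\psi(b),0)$ and substituting into $p(xy)=(-1)^{|x||y|}p(x)p(y)$, one finds that $\psi\colon\overline{S_2}\to S_1$ is an isomorphism of superalgebras, so $S_2\cong\overline{S_1}$ and $A\cong S\oplus\overline S$ with $S:=S_1$; moreover $\phi\psi$ and $\psi\phi$ both equal the grading automorphism, by $p^2(a)=(-1)^{|a|}a$. Then I would absorb $\psi$ and the sign into the chosen isomorphism $A\cong S\oplus\overline S$, precomposing with the graded automorphisms $\mathrm{id}\oplus\psi$ and $\mathrm{id}\oplus(\text{grading automorphism})$ of $S\oplus\overline S$, so as to bring $p$ into the normal form $pex(a,c)=((-1)^{|(a,c)|}c,a)$. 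Together with the classification of finite-dimensional simple superalgebras over an algebraically closed field ($M_{k,h}(F)$ and $Q(n)$), this produces exactly items (3) and (4).

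\emph{The simple case (A), by untwisting.} Since $i\in F$, there is an explicit superalgebra isomorphism $\mu\colon S\to\overline S$, namely $\mu(a)=i^{|a|}a$ (multiplicativity follows from $i^{|a|+|b|}=(-1)^{|a||b|}i^{|ab|}$). A pseudoautomorphism is, essentially by definition, a graded superalgebra isomorphism $p\colon S\to\overline S$ (the relation $p(ab)=(-1)^{|a||b|}p(a)p(b)$ says precisely that $p$ is multiplicative into $\overline S$), so the composite $h:=\mu^{-1}\circ p$ is an ordinary graded automorphism of $S$, with $p(a)=i^{|a|}h(a)$, and $p^2(a)=(-1)^{|a|}a$ translates into $h^2=\mathrm{id}$. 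Conversely every graded automorphism $h$ with $h^2=\mathrm{id}$ gives back a pseudoautomorphism, and $(S,p)\cong(S,p')$ as $p$-algebras iff the corresponding $h,h'$ are conjugate by a graded automorphism of $S$. This reduces the classification to listing conjugacy classes of order-$\le 2$ graded automorphisms: for $S=M_{k,h}(F)$ the inner ones are conjugation by $\mathrm{diag}(P,Q)$ with $P=\mathrm{diag}(I_{k_1},-I_{k_2})$ and $Q=\mathrm{diag}(I_{h_1},-I_{h_2})$, and $p(a)=i^{|a|}h(a)$ recovers item (1); when $k=h$ there is in addition the block-swap automorphism, yielding item (2); for $S=Q(n)$ every graded automorphism has the shape $a+cb\mapsto f(a)\pm cf(b)$ with $f\in\mathrm{Aut}(M_n(F))$, and $h^2=\mathrm{id}$ forces $f^2=\mathrm{id}$, giving item (5).

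\emph{Main difficulty.} The heart of the proof is the case $k=h$ of $M_{k,h}(F)$: one must show that every order-$\le 2$ graded automorphism of ``swap type'', $h=\sigma\circ\mathrm{Inn}(g)$, is conjugate through graded automorphisms to the pure block-swap $\sigma$, so that no normal form beyond item (2) appears. The condition $h^2=\mathrm{id}$ amounts to $\sigma(g)g\in F^\times I$, and one then has to exploit the available moves $g\mapsto ugu^{-1}$ and $g\mapsto\sigma(u)gu^{-1}$ to normalize $g$ completely. A secondary, lower-stakes point is the sign ambiguity: the two admissible isomorphisms $\mu(a)=i^{|a|}a$ and $\mu(a)=(-i)^{|a|}a$ account for the $\pm i$ in items (1) and (5), and one should verify whether these $p$-algebras are genuinely non-isomorphic, as well as confirm that the inequalities $k_1\ge k_2$, $h_1\ge h_2$ give a complete, non-redundant parametrization in item (1).
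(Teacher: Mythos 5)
This theorem is imported into the paper without proof (it is quoted from \cite{IoppoloLaMattina}), so there is no internal argument to compare against; judged on its own, your strategy is sound and is essentially the one the cited classification follows: kill the radical, split into the one-orbit and two-orbit cases for the action of $p$ on the simple graded components, and in the one-orbit case use the superalgebra isomorphism $\mu(a)=i^{|a|}a\colon S\to\overline{S}$ to convert $p$ into a graded automorphism of order at most $2$, reducing everything to the known classification of such automorphisms on $M_{k,h}(F)$ and $Q(n)$. The two issues you flag are indeed the only ones needing care, and both close up over an algebraically closed field: a swap-type automorphism is inner by some $u=\left(\begin{smallmatrix}0&g\\ \lambda g^{-1}&0\end{smallmatrix}\right)$ and is normalized to the standard block swap by block-diagonal conjugation plus a scalar rescaling, while the parametrization $k_1\ge k_2$, $h_1\ge h_2$ in item (1) is complete because replacing $(P,Q)$ by $(-P,-Q)$ leaves $p$ unchanged and replacing $Q$ by $-Q$ is exactly compensated by switching the sign $\pm i$, so all four reorderings of the pairs $(k_1,k_2)$ and $(h_1,h_2)$ occur in the same isomorphism class up to that sign.
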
                                               

\section{{\bf The $p$-exponent}} \label{Section 3}
Now we are interested in studying the $p$-algebras in the context of the theory of polynomial identities.  

If $A$ is a $p$-algebra, since $\mbox{char}\,F=0$ and there exists $i \in F$ such that $i^2 = -1$, we can write 
$$
A=A_0^+ \oplus A_0^- \oplus A_1^i \oplus A_1^{-i}.
$$
Here $A_0^+=\{a \in A_0 \mid p(a)=a\}$ and $A_0^-=\{a \in A_0 \mid p(a)=-a\}$ are the sets of symmetric and skew elements of $A_0$, respectively, and $A_1^i=\{a \in A_1 \mid p(a)=ia\}$ and $A_1^{-i}=\{a \in A_1 \mid p(a)=-ia\}$ are the sets of the so-called $i$-symmetric and $i$-skew elements of $A_1$, respectively.

One can define in a natural way a pseudoautomorphism on the free associative algebra $\F$ on a countable set $X$ over $F.$ We write $X$ as the union of two disjoint infinite sets $Y$ and $Z$, requiring that their elements are of homogeneous degree 0 and 1, respectively. Then each set is written as the disjoint union of two other infinite sets of symmetric and skew elements and of $i$-symmetric and $i$-skew elements, respectively. The free superalgebra with pseudoautomorphism is denoted $F\langle Y\cup Z, p \rangle $ and we write
$$
F\langle Y\cup Z, p \rangle = F\langle y^+_1, y^-_1, z^+_1, z^-_1, y^+_2, y^-_2, z^+_2, z^-_2, \ldots\rangle,
$$
where $y_i^+$ stands for a (even) symmetric variable, $y_i^-$ for a (even) skew variable, $z_i^+$ for a (odd) $i$-symmetric variable and $z_i^-$ for a (odd) $i$-skew variable.

A polynomial $f\in F\langle Y\cup Z, p \rangle$ is a $p$-polynomial identity of $A$ (or simply a $p$-identity), and we write $f\equiv 0,$ if
it vanishes for all substitutions $y^{\pm} \mapsto a^{\pm}\in A_{0}^{\pm},$ $z^{\pm} \mapsto b^{\pm}\in A_{1}^{\pm i}$. Let  $\Id^{p}(A)$ denote the set of all $p$-identities of $A$. Clearly it is an ideal of $F\langle Y\cup Z, p \rangle$ invariant under  all endomorphisms of the free superalgebra commuting with the pseudoautomorphism $p$. As in the ordinary case, it is easily seen that in  characteristic zero, every $p$-identity is equivalent to a system of multilinear $p$-identities. Hence if
we denote by
$$
P^{p}_n=\mbox{span}_F\{w_{\sigma(1)} \cdots w_{\sigma(n)}\mid \sigma
\in S_n, \ w_i \in \{ y^+_i, y^-_i, z^+_i, z^-_i \},   \ i=1, \ldots, n\}
$$
the space of  multilinear polynomials of degree $n$ in the variables
$y^+_i$, $y^-_i$, $z^+_i$, $z^-_i$, $i=1, \ldots, n,$ the study of $\Id^{p}(A)$ is equivalent to
the study of $P^{p}_n\cap \Id^{p}(A)$, for all $n\geq 1$. The $n$-th $p$-codimension of $A$ is the non-negative integer
$$
c^{p}_n(A)=\dim_F\frac{P^{p}_n}{P^{p}_n\cap \Id^{p}(A)},\ n\geq 1.
$$
If $A$  satisfies an ordinary polynomial identity, it was proved that $c^{p}_n(A)$, $n=1,2, \ldots, $ is exponentially bounded (\cite{Giordani}).

\medskip

The first aim of this paper is to determine the exponential rate of growth of the sequence of $p$-codimensions of a finite dimensional $p$-algebra. We start with the following definition.

\begin{Definition} \label{definition of admissible and reduced algebra}
Let $A =B_1 \oplus \cdots \oplus B_k + J $ be a finite dimensional $p$-algebra and let $C_1, \ldots, C_h$ be distinct simple $p$-subalgebras of $A$ from $\{B_1, \ldots, B_k\}$. The  $p$-algebra $C=C_1\oplus \cdots\oplus C_h$ is called
 $p$-admissible if 
 $$C_{1}J\cdots J C_{h-1}JC_h \ne 0.$$
\end{Definition}

Now we can define the following integer.

\begin{Definition} \label{definition of d}
Let $A$ be a finite dimensional $p$-algebra. We set
$$
d = d(A) := \max \left \{ \dim_F C : \ C \ \mbox{is an admissible p-subalgebra of } A \right \}.
$$
\end{Definition}

The role of such an integer in the description of the asymptotic behavior of the $p$-codimensions is explained in the following result.



\begin{Theorem} \label{the exponent}
Let $A$ be a finite dimensional $p$-algebra over $F$ and consider the integer $d$ of Definition \ref{definition of d}. Then there exist constants $a_1 >0$ and $a_2, b_1, b_2$ such that
$$
a_1 n^{b_1} d^n \leq c_n^p(A) \leq a_2 n^{b_2} d^n.
$$ 
Hence the $p$-exponent of $A$, $\exp^p(A) = \lim_{n \rightarrow \infty} \sqrt[n]{c_n^p(A)} $ exists and it is a non-negative integer. 

\end{Theorem}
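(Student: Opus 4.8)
The plan is to establish the two inequalities separately, adapting the method of \cite{GiambrunoZaicev1998, GiambrunoZaicev1999} to the $p$-setting, and then to take $n$-th roots. Since $p$-codimensions are unchanged under extension of the base field, I would first reduce to the case in which $F$ is algebraically closed, so that the structure of $A$ is controlled by Theorems \ref{WM} and \ref{phi super simple algebras}: write $A = B_1 \oplus \cdots \oplus B_k + J$ with $J$ a nilpotent $p$-ideal, say $J^{q+1} = 0$, and each $B_i$ a simple $p$-algebra from the five families of Theorem \ref{phi super simple algebras}. The degenerate case $d = 0$ (where $A$ is nilpotent, or $c^p_n(A) = 0$ for large $n$) is handled separately, so one may assume $d \geq 1$.

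\textbf{Upper bound.} Fix a multilinear monomial in $P^{p}_n$ and a substitution of its variables by homogeneous elements of $A$ of the prescribed symmetry type; decompose each such element as $\beta + \rho$ with $\beta$ in the semisimple part $B = B_1 \oplus \cdots \oplus B_k$ and $\rho \in J$, and expand the product by multilinearity. Since $J^{q+1} = 0$, every nonzero term of the expansion involves at most $q$ variables evaluated in $J$, the remaining ones being evaluated in $B$. Splitting $P^{p}_n$ according to the $O(n^q)$ choices of the radical variables and absorbing the bounded radical data into combinatorial constants, it remains to estimate the contribution of the semisimple factors. For this I would use: (i) the $p$-codimension of a semisimple $p$-algebra $C = C_1 \oplus \cdots \oplus C_h$ is at most $a\,n^{b}(\dim_F C)^n$, which follows from the classification of Theorem \ref{phi super simple algebras} together with the elementary bound $c^{p}_n(C) \leq (\dim_F C)^{n+1}$ (a $p$-analogue of Regev's estimate); and (ii) the admissibility condition of Definition \ref{definition of admissible and reduced algebra} forces any nonzero evaluation to pass through a single $p$-admissible subalgebra, so that the exponential base is $d$ rather than $\dim_F B$. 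Combining these yields $c^{p}_n(A) \leq a_2 n^{b_2} d^n$.

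\textbf{Lower bound.} This is the core of the argument. Choose a $p$-admissible subalgebra $C = C_1 \oplus \cdots \oplus C_h$ with $\dim_F C = d$ and elements $r_1, \ldots, r_{h-1} \in J$, $c^{(1)} \in C_1, \ldots, c^{(h)} \in C_h$ with $c^{(1)} r_1 c^{(2)} \cdots r_{h-1} c^{(h)} \neq 0$. For each simple block $C_j$, using the explicit pseudoautomorphisms listed in the five cases of Theorem \ref{phi super simple algebras}, one constructs a multilinear $p$-polynomial $f_j$ in $\dim_F C_j$ variables, one for each vector of a fixed homogeneous, symmetry-typed basis of $C_j$, which is \emph{alternating} on this full set of variables and which, for a suitable evaluation, outputs a prescribed nonzero element of $C_j$; this is the $p$-analogue of the Giambruno--Zaicev ``nice'' central-type polynomials and requires a case analysis exploiting that $i \in F$ and that $F$ is algebraically closed. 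One then builds a polynomial of degree $n$ by taking, for each $j$, $t_j$ pairwise disjoint copies of the variable set of $f_j$, multiplying the resulting blocks in the order $1, 2, \ldots, h$ with the radical variables interposed, and alternating each of the $h$ groups of $t_j$ blocks. Evaluating the $j$-th group into $C_j$ and the radical variables into the $r_j$ shows that this polynomial is not a $p$-identity of $A$. By the $S_n$-representation theory of $P^{p}_n$, the $S_n$-span of such a polynomial contains irreducible constituents indexed by tuples of nearly rectangular Young diagrams with column heights $\dim_F C_1, \ldots, \dim_F C_h$, and the hook length formula bounds its dimension from below by $a_1 n^{b_1}\bigl(\sum_j \dim_F C_j\bigr)^n = a_1 n^{b_1} d^n$, giving $c^{p}_n(A) \geq a_1 n^{b_1} d^n$.

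Putting the two estimates together yields $a_1 n^{b_1} d^n \leq c^{p}_n(A) \leq a_2 n^{b_2} d^n$, and taking $n$-th roots as $n \to \infty$ shows that $\exp^p(A)$ exists and equals the non-negative integer $d$. I expect the main obstacle to be the lower bound: manufacturing, for each of the five families in Theorem \ref{phi super simple algebras}, a multilinear polynomial that alternates on a complete graded/symmetry-typed basis while retaining a fixed nonzero value, and then checking that gluing these blocks through the radical elements $r_j$ does not cause the alternations to collapse under evaluation on $A$.
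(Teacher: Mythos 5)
Your proposal is correct and follows essentially the same route as the paper, whose own proof consists of the same reduction to an algebraically closed field followed by a word-for-word adaptation of the superinvolution argument of \cite{Ioppolo2018} (itself the Giambruno--Zaicev scheme you outline: upper bound via the Wedderburn--Malcev decomposition, nilpotency of $J$ and the admissibility constraint, lower bound via alternating polynomials on bases of the simple components glued through radical elements, plus representation-theoretic dimension estimates). The construction of the alternating polynomials for the five families of Theorem \ref{phi super simple algebras}, which you flag as the main obstacle, is precisely the content delegated to that reference.
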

\begin{proof}
Since the $p$-codimensions do not change by extending the ground field, we may assume that the field $F$ is algebraically closed. Now the result can be proved, with the necessary changes, by following word by word the proof given in \cite{Ioppolo2018} in the setting of superalgebras with superinvolution.
\end{proof}

As an immediate consequence we get the following.

\begin{Corollary} 
Under the hypotheses of Theorem \ref{the exponent}, the sequence
$c_n^{p}(A)$, $n=1, 2, \ldots$, either is polynomially bounded $($i.e., $\exp^p(A)\leq 1)$ or it grows exponentially $($i.e., $\exp^p(A)\ge 2)$.
\end{Corollary}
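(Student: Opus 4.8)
The plan is to read off the claimed alternative directly from the two inequalities of Theorem \ref{the exponent}, by distinguishing cases according to the value of the non-negative integer $d=d(A)$ of Definition \ref{definition of d}.

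First I would dispose of the case in which $A$ is nilpotent: then $c_n^p(A)=0$ for every $n$ exceeding the nilpotency index, so the sequence is trivially polynomially bounded and $\exp^p(A)=0\le 1$. Otherwise $A$ contains a simple $p$-subalgebra, which is itself an admissible $p$-subalgebra (the condition in Definition \ref{definition of admissible and reduced algebra} being vacuous when $h=1$), so that $d\ge 1$. Being an integer, $d$ then satisfies either $d=1$ or $d\ge 2$. If $d=1$, the upper bound of Theorem \ref{the exponent} gives $c_n^p(A)\le a_2 n^{b_2} d^n=a_2 n^{b_2}$, so the $p$-codimension sequence is polynomially bounded; taking $n$-th roots and letting $n\to\infty$ then yields $\exp^p(A)\le 1$. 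If instead $d\ge 2$, the lower bound of Theorem \ref{the exponent} gives $c_n^p(A)\ge a_1 n^{b_1} d^n\ge a_1 n^{b_1} 2^n$ with $a_1>0$, so the sequence grows exponentially, and taking $n$-th roots gives $\exp^p(A)\ge 2$.

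Since these cases are exhaustive and mutually exclusive, the dichotomy follows. There is no genuine obstacle here: all the substance already resides in Theorem \ref{the exponent}, and the present corollary merely records that an integer is either $\le 1$ or $\ge 2$ and translates this through those bounds.
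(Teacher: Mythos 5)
Your argument is correct and matches the paper's intent: the paper states this corollary as an immediate consequence of Theorem \ref{the exponent} (with no separate proof), and your case analysis on the integer $d$ --- $A$ nilpotent, $d=1$, or $d\ge 2$ --- together with the two-sided bounds is exactly the reasoning being invoked. The extra care you take (handling the nilpotent case and noting that a single simple component is admissible, so $d\ge 1$ otherwise) is a welcome elaboration rather than a deviation.
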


In \cite{Giordani}, the second author described the varieties of $p$-algebras of polynomial growth by giving a finite list of $p$-algebras to be excluded from the variety. Recall that the growth of a variety $\mathcal{V}$ of $p$-algebras is defined as the growth of the sequence of $p$-codimensions of any algebra $A$ generating $\mathcal{V}$, i.e., $\mathcal{V}= \V^p(A)$. Then we say that $\mathcal{V}$ has polynomial growth if $c_n^p(\mathcal{V})$
 is polynomially bounded and $\mathcal{V}$ has
almost polynomial growth if $c_n^p(\mathcal{V})$ is not polynomially bounded but every proper subvariety of $\mathcal{V}$ has polynomial growth.



Let $F\oplus F$ be the two-dimensional commutative algebra. We can see it as a superalgebra with trivial grading or with the natural grading
$$
F \oplus F = F(1,1) \oplus F(1,-1).
$$
We consider the following three superalgebras with pseudoautomorphism:
\begin{itemize}
    \item $D$, the algebra $F\oplus F$ with trivial grading and  pseudoautomorphism 
    $
    ex(a,b) = (b,a).
    $
\vspace{0.1 cm}
    
    \item $D^i$, the algebra $F\oplus F$ with natural grading and pseudoautomorphism 
    $
    ex_i(a,b) = i^{\vert(a,b) \vert}(b,a).
    $
    \vspace{0.1 cm}
    
    \item $D^{-i}$, the algebra $F\oplus F$ with natural grading and pseudoautomorphism 
    $
    ex_{-i}(a,b) = (-i)^{\vert(a,b) \vert}(b,a).
    $
\end{itemize}

Now consider the algebra $
UT_2(F) = \left \{ \begin{pmatrix}
  a & b\\ 
  0 & c
\end{pmatrix} \mid a,b,c \in F \right \}
$ of $2 \times 2$ upper-triangular matrices.
We can see it as a superalgebra with trivial grading or with the natural grading 
$$ 
UT_2(F) = \left \{ \begin{pmatrix}
  a & 0\\ 
  0 & c
\end{pmatrix} \mid a,c \in F \right  \} \oplus \left \{ \begin{pmatrix}
  0 & b\\ 
  0 & 0
\end{pmatrix}    \mid b \in F \right \}. 
$$

We consider the following four superalgebras with pseudoautomorphism:
\begin{itemize}
    \item $UT_2$, the algebra $UT_2(F)$ with trivial grading and trivial pseudoautomorphism.
    \vspace{0.2 cm}

    \item $UT_2^-$, the algebra $UT_2(F)$ with trivial grading and pseudoautomorphism:
    $
    p \left( \begin{pmatrix}
					a & b \\
					0 & c \\
				\end{pmatrix} \right) = \begin{pmatrix}
					a & -b \\
					0 & c \\
				\end{pmatrix}.
    $
    \vspace{0.1 cm}
    
\item $UT_2^i$, the algebra $UT_2(F)$ with natural grading and pseudoautomorphism:
    $
    p \left( \begin{pmatrix}
					a & b \\
					0 & c \\
				\end{pmatrix} \right) = \begin{pmatrix}
					a & ib \\
					0 & c \\
				\end{pmatrix}.
    $
    \vspace{0.2 cm}

\item $UT_2^{-i}$, the algebra $UT_2(F)$ with natural grading and pseudoautomorphism:
    $
    p \left( \begin{pmatrix}
					a & b \\
					0 & c \\
				\end{pmatrix} \right) = \begin{pmatrix}
					a & -ib \\
					0 & c \\
				\end{pmatrix}.
    $  
    \vspace{0.2 cm}
\end{itemize}

These $p$-algebras characterize the varieties of polynomial growth as stated in the following result (\cite[Theorem 9]{Giordani}).

\begin{Theorem}  \label{cn(A) pol bounded}
Let $A$ be a finite dimensional $p$-algebra over $F$. The sequence $c_n^{p}(A)$, $n=1, 2, \ldots,$ is polynomially bounded $($i.e., $\exp^p(A)\leq 1)$  if and only if  $UT_2$, $UT_2^{-}$, $UT_2^{i}$, $UT_2^{-i}$, $D$, $D^i$, $D^{-i}  \notin \V^{p}(A). $ 
\end{Theorem}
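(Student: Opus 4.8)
The plan is to reduce both directions, via Theorem~\ref{the exponent}, to the single statement that $c_n^p(A)$ is polynomially bounded if and only if $d(A)\le 1$. Since neither the sequence $c_n^p(A)$ nor the membership of a fixed $F$-algebra in $\V^p(A)$ is affected by extending the ground field, I would assume throughout that $F$ is algebraically closed, so that the classification of finite dimensional simple $p$-algebras (Theorem~\ref{phi super simple algebras}) is available.

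\emph{Necessity.} Here I would first record that each of the seven algebras has $d\ge 2$, hence, by Theorem~\ref{the exponent}, exponentially growing $p$-codimensions. For $UT_2, UT_2^-, UT_2^i, UT_2^{-i}$ a Wedderburn--Malcev decomposition is $Fe_{11}\oplus Fe_{22}+Fe_{12}$ with radical $Fe_{12}$, and $Fe_{11}\cdot Fe_{12}\cdot Fe_{22}=Fe_{12}\ne 0$, so $C=Fe_{11}\oplus Fe_{22}$ is an admissible $p$-subalgebra of dimension $2$; for $D, D^i, D^{-i}$ the algebra is itself a simple $p$-algebra of dimension $2$. Consequently, if one of the seven belonged to $\V^p(A)$ then $\Id^p(A)$ would be contained in its ideal of $p$-identities, forcing $c_n^p(A)$ to grow exponentially, contrary to hypothesis.

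\emph{Sufficiency (contrapositive).} Assume $c_n^p(A)$ is not polynomially bounded, i.e. $d=d(A)\ge 2$, and fix an admissible $C=C_1\oplus\cdots\oplus C_h$ with $\dim_F C\ge 2$; I claim one of the seven algebras occurs as a $p$-subalgebra of $A$. If $h\ge 2$, then from $C_1J\cdots JC_h\ne 0$ one extracts $C_1JC_2\ne 0$, so there are the identity elements $e_1\in C_1$, $e_2\in C_2$ (which are even and fixed by $p$) and $w\in J$ with $e_1we_2\ne 0$. Replacing $w$ by $e_1we_2$ and then by a nonzero homogeneous $p$-eigencomponent of it (legitimate because $J$, being a $p$-ideal, splits along the four eigenspaces, and multiplication on the left and right by $e_1,e_2$ preserves each of them), we may assume $w$ is homogeneous with $p(w)=\lambda w$, $\lambda\in\{1,-1,i,-i\}$. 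As $C_1,C_2$ are distinct Wedderburn components we have $e_2e_1=0$, hence $w^2=0$, so $Fe_1\oplus Fe_2\oplus Fw$ is a $3$-dimensional $p$-subalgebra isomorphic, according to $\lambda=1,-1,i,-i$, to $UT_2, UT_2^-, UT_2^i, UT_2^{-i}$. If $h=1$, then $C_1$ is a simple $p$-algebra of dimension $\ge 2$, hence one of the five families of Theorem~\ref{phi super simple algebras}, and I would treat each: in $M_{k,h}(F)$ of type~(1) (and in $M_k(F)$ when $h=0$) a corner $Fe_{pp}\oplus Fe_{qq}\oplus Fe_{pq}$ with $p\ne q$ is, reading off the induced grading and the sign $\epsilon_p\epsilon_q$ (respectively $\pm i\,\epsilon_p\delta_q$) of the pseudoautomorphism, a copy of one of $UT_2, UT_2^-, UT_2^i, UT_2^{-i}$; in $M_{k,k}(F)$ of type~(2), in $M_{k,h}(F)\oplus\overline{M_{k,h}(F)}$ of type~(3) and in $Q(n)\oplus\overline{Q(n)}$ of type~(4), a pair of orthogonal even idempotents interchanged by the pseudoautomorphism spans a copy of $D$; in $Q(n)$ of type~(5), if $f=\mathrm{id}$ the span of $e_{11}$ and $ce_{11}$ is a $2$-dimensional $p$-subalgebra with natural grading and $(ce_{11})^2=e_{11}$, i.e. a copy of $D^i$ or $D^{-i}$, while if $f\ne\mathrm{id}$ then by Skolem--Noether (using $f^2=\mathrm{id}$ and $F$ algebraically closed) $f$ is conjugation by $\mathrm{diag}(I_s,-I_t)$ with $s,t\ge 1$, and $Fe_{11}\oplus Fe_{nn}\oplus Fe_{1n}$ inside the even part $M_n(F)$ is a copy of $UT_2^-$. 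In every case $\V^p(A)$ contains one of the seven, which proves the contrapositive.

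\emph{Main obstacle.} Once Theorem~\ref{the exponent} is granted, the only non-formal part is the case $h=1$: one must run through the five families of simple $p$-algebras and, for each, exhibit a $2$- or $3$-dimensional $p$-subalgebra that is \emph{exactly} one of the seven listed, which requires a careful check of the induced grading and of the precise $\pm$ or $\pm i$ appearing in the pseudoautomorphism (this is where the explicit form of the maps in Theorem~\ref{phi super simple algebras}, together with the splitting parameters $k_1,k_2,h_1,h_2$, genuinely enters).
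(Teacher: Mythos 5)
Your argument is correct, but it is worth noting that the paper itself gives no proof of this statement: Theorem~\ref{cn(A) pol bounded} is quoted verbatim from \cite[Theorem 9]{Giordani}, which establishes the polynomial-growth characterization by direct structural and combinatorial arguments (it predates the $p$-exponent theorem, which is one of the new results of the present paper). What you do instead is derive the theorem as a corollary of Theorem~\ref{the exponent}: polynomial boundedness becomes equivalent to $d(A)\le 1$, the necessity direction reduces to checking that each of the seven algebras has an admissible subalgebra of dimension $2$, and the sufficiency direction becomes a case analysis --- the $h\ge 2$ case producing $UT_2,UT_2^-,UT_2^i,UT_2^{-i}$ from a homogeneous $p$-eigenvector $e_1we_2$ of the radical, and the $h=1$ case running through the five families of Theorem~\ref{phi super simple algebras} to exhibit a copy of one of the seven. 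I checked the details (the units $e_l$ are even and $p$-fixed, $e_1w_\lambda e_2$ stays in the $\lambda$-eigenspace because the $e_l$ are even, the constraint $p^2(w)=(-1)^{|w|}w$ ties the eigenvalue to the degree so exactly the four $UT_2$-variants arise, and each simple type of dimension $\ge 2$ does contain one of the seven) and they are sound; this is in fact the same strategy the paper uses later for the exponent-$2$ characterization (Lemmas~\ref{remark}--\ref{C21 ... C24 in var(A)} and Theorem~\ref{characterization algebras with exponent greater than two}). What your route buys is brevity and uniformity with the rest of the paper; what it costs is dependence on Theorem~\ref{the exponent} (whose proof is itself only sketched by reference to \cite{Ioppolo2018}), whereas the original proof in \cite{Giordani} is independent of the exponent machinery. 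There is no circularity, since the proof of the exponent theorem does not rely on the polynomial-growth classification, but you should state explicitly that you are using Theorem~\ref{the exponent} as a black box rather than reproving the result of \cite{Giordani} from scratch.
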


Given two $p$-algebras $A$ and $B$ we say that they are equivalent if $\Id^p(A)=\Id^p(B).$
\begin{Corollary}\cite{Giordani}  \label{UT2s are APG} 
The algebras $UT_2$, $UT_2^{-}$, $UT_2^{i},$  $UT_2^{-i}$, $D$, $D^i$, $D^{-i}$ are the only finite dimensional $p$-algebras, up to equivalence, generating varieties of almost polynomial growth.
\end{Corollary}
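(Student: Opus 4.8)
The plan is to obtain the corollary from Theorem~\ref{cn(A) pol bounded} together with the fact that the seven listed $p$-algebras are pairwise \emph{incomparable} in the lattice of varieties of $p$-algebras. Throughout, write $B_1,\dots,B_7$ for $UT_2,\,UT_2^-,\,UT_2^i,\,UT_2^{-i},\,D,\,D^i,\,D^{-i}$.

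\emph{Step 1: incomparability.} I would first prove that $B_\ell\notin\V^p(B_j)$ whenever $j\ne\ell$, equivalently that the $T^p$-ideals $\Id^p(B_1),\dots,\Id^p(B_7)$ form an antichain; in particular the seven algebras are pairwise non-equivalent. Concretely, for each ordered pair $(B_j,B_\ell)$ one exhibits a multilinear $p$-polynomial in $\Id^p(B_j)\setminus\Id^p(B_\ell)$. I would organize the $21$ unordered pairs by invariants that can be read off directly from the explicit descriptions in the statement and from the spaces $A_0^{\pm},\,A_1^{\pm i}$: (a) commutativity, which distinguishes $D,D^i,D^{-i}$ from the four $UT_2$-algebras; (b) the set of \emph{active variable types}, i.e.\ those among $y^+,y^-,z^+,z^-$ that do not act as a $p$-identity — these sets are $\{y^+\}$ for $UT_2$, $\{y^+,y^-\}$ for $UT_2^-$ and for $D$, $\{y^+,z^+\}$ for $UT_2^i$ and for $D^{-i}$, and $\{y^+,z^-\}$ for $UT_2^{-i}$ and for $D^i$; (c) nilpotency of the radical: on each $UT_2$-algebra the Jacobson radical squares to zero, so the square of the corresponding radical variable is a $p$-identity ($y_1^-y_2^-\equiv0$ on $UT_2^-$, and $z_1^{\pm}z_2^{\pm}\equiv0$ on $UT_2^{\pm i}$), which fails on the commutative algebra with the same active types; (d) centrality of the symmetric even part, i.e.\ $[y_1^+,y_2^+]\equiv0$, which holds on $UT_2^-,UT_2^i,UT_2^{-i}$ but not on $UT_2$. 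A single-variable identity $y_1^-\equiv0$ or $z_1^{\pm}\equiv0$ already separates in both directions any two of the $B_j$ whose active-type sets are distinct and incomparable; the remaining cases — one direction of each pair $(UT_2,B_\ell)$ with $\ell\ne1$, and both directions of the three pairs with equal active types — are settled by (a), (c), (d) with identities of degree at most $2$.

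\emph{Step 2: the two implications.} Granting Step 1, the statement is formal. First, each $\V^p(B_j)$ has almost polynomial growth: since $B_j\in\V^p(B_j)$, Theorem~\ref{cn(A) pol bounded} applied to the finite dimensional algebra $B_j$ gives that $c_n^p(B_j)$ is not polynomially bounded; and if $\mathcal W\subsetneq\V^p(B_j)$ is a proper subvariety, then $B_j\notin\mathcal W$, and for $\ell\ne j$ we have $B_\ell\notin\V^p(B_j)$ by Step 1, hence $B_\ell\notin\mathcal W$, so $\mathcal W$ contains none of $B_1,\dots,B_7$ and therefore $c_n^p(\mathcal W)$ is polynomially bounded by Theorem~\ref{cn(A) pol bounded}. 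Conversely, let $A$ be a finite dimensional $p$-algebra such that $\mathcal V=\V^p(A)$ has almost polynomial growth. Then $c_n^p(A)$ is not polynomially bounded, so by Theorem~\ref{cn(A) pol bounded} some $B_j$ lies in $\mathcal V$, whence $\V^p(B_j)\subseteq\mathcal V$; but $c_n^p(B_j)$ is not polynomially bounded, so $\V^p(B_j)$ cannot be a \emph{proper} subvariety of $\mathcal V$ (otherwise its codimension sequence, equal to $c_n^p(B_j)$, would be polynomially bounded), and hence $\V^p(B_j)=\mathcal V$, i.e.\ $A$ is equivalent to $B_j$. Together with the pairwise non-equivalence from Step 1, this is exactly the assertion of the corollary.

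\emph{Main difficulty.} The computational heart is Step 1: no individual comparison is hard, but one must be systematic so as not to overlook a pair, and for each pair one must carefully check the less obvious of the two directions (the identity satisfied by the ``larger-looking'' algebra). A minor technical point in Step 2 is the use of Theorem~\ref{cn(A) pol bounded} for a subvariety $\mathcal W\subsetneq\V^p(B_j)$ rather than for a fixed finite dimensional algebra; this is justified because such a $\mathcal W$ is itself generated by a finite dimensional $p$-algebra, via the Wedderburn--Malcev decomposition of Theorem~\ref{WM} underlying the quoted results, and in any case it can be circumvented by computing the $S_n$-cocharacter of each $B_j$ and checking directly that deleting any irreducible constituent already yields polynomial growth.
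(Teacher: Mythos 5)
The paper does not prove this corollary at all: it is imported verbatim from \cite{Giordani}, so there is no internal argument to compare against, and your proposal has to stand on its own. Its overall architecture is the standard one and is surely close to what \cite{Giordani} does: pairwise incomparability of the seven $T^p$-ideals plus the exclusion theorem. Your Step 1 is fine --- the active-type bookkeeping is correct (including the slightly counterintuitive pairing of $D^i$ with $UT_2^{-i}$ and $D^{-i}$ with $UT_2^{i}$), all $21$ pairs are covered by your invariants (a)--(d), and the converse implication in Step 2 (a finite dimensional $A$ generating an a.p.g.\ variety must be equivalent to some $B_j$) is rigorous given Theorem~\ref{cn(A) pol bounded}.

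The genuine gap is in the forward implication, exactly where you flag a ``minor technical point'': to conclude that a proper subvariety $\mathcal W\subsetneq\V^p(B_j)$ has polynomial growth, you invoke Theorem~\ref{cn(A) pol bounded}, but that theorem is stated only for varieties generated by a finite dimensional $p$-algebra, and nothing in this paper guarantees that $\mathcal W$ is of that form. Your proposed justification --- ``via the Wedderburn--Malcev decomposition of Theorem~\ref{WM}'' --- does not work: Wedderburn--Malcev describes the structure of a given finite dimensional algebra, it says nothing about representability of subvarieties. What is actually needed is either a representability theorem for $p$-varieties satisfying a Capelli identity (a Kemer-type result not quoted anywhere in this paper), or the direct route you mention only as a fallback: decompose the $\langle n\rangle$-cocharacter of each $B_j$, and show that any consequence of $\Id^p(B_j)$ properly containing it forces polynomially many irreducible constituents. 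That computation is the actual mathematical content of the forward direction --- it is where \cite{Giordani} does its work --- and in your write-up it is named but not carried out. So the proposal is a correct skeleton with the load-bearing step of one direction left unproved.
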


Now we recall that a variety  $\mathcal V$ of $p$-algebras
is minimal with respect to the $p$-exponent if for any proper subvariety  $\mathcal U,$ 
we have that $\exp^{p}(\mathcal{V})>\exp^{p}(\mathcal{U}).$ 
Here the $p$-exponent of a variety is the $p$-exponent of a generating algebra.
Since the above algebras have $p$-exponent equal to 2, by using this definition we get the following result.

\begin{Corollary} 
    The algebras $UT_2$, $UT_2^{-}$, $UT_2^{i}$, $UT_2^{-i}$, $D$, $D^i$, $D^{-i}$ are the only finite dimensional $p$-algebras, up to equivalence, generating minimal varieties of $p$-exponent $2$.
   \end{Corollary}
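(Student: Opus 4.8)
The plan is to deduce the statement from Corollary \ref{UT2s are APG}, combined with the dichotomy ``polynomially bounded (i.e.\ $\exp^p\le 1$) or exponentially growing (i.e.\ $\exp^p\ge 2$)'' recorded in the Corollary following Theorem \ref{the exponent} and with the integrality of $\exp^p$ from Theorem \ref{the exponent}. The key observation is that a finite dimensional $p$-algebra generates a minimal variety of $p$-exponent $2$ if and only if that variety has almost polynomial growth and $p$-exponent $2$. Since, as recalled just before the statement, each of the seven algebras $UT_2,\,UT_2^-,\,UT_2^i,\,UT_2^{-i},\,D,\,D^i,\,D^{-i}$ has $p$-exponent $2$, and since by Corollary \ref{UT2s are APG} these are precisely (up to equivalence) the finite dimensional $p$-algebras generating varieties of almost polynomial growth, the claim follows.

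In detail, let $A$ be a finite dimensional $p$-algebra and set $\mathcal V=\V^p(A)$. Suppose $\mathcal V$ is minimal of $p$-exponent $2$. Then $\exp^p(\mathcal V)=2$, so $c_n^p(A)$ is not polynomially bounded. If $\mathcal U\subsetneq\mathcal V$ is a proper subvariety, minimality yields $\exp^p(\mathcal U)<2$, and since $\exp^p(\mathcal U)$ is a non-negative integer by Theorem \ref{the exponent} we get $\exp^p(\mathcal U)\le 1$, hence $\mathcal U$ is polynomially bounded by the dichotomy above. Thus $\mathcal V$ has almost polynomial growth and, by Corollary \ref{UT2s are APG}, $A$ is equivalent to one of the seven listed algebras. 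Conversely, if $A$ is one of these seven, then $\mathcal V$ has almost polynomial growth by Corollary \ref{UT2s are APG}, so every proper subvariety of $\mathcal V$ is polynomially bounded and therefore has $p$-exponent $\le 1<2=\exp^p(\mathcal V)$; hence $\mathcal V$ is minimal of $p$-exponent $2$. The ``up to equivalence'' qualifier is inherited verbatim from Corollary \ref{UT2s are APG}.

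The argument is purely formal once Corollary \ref{UT2s are APG} is available; the only non-logical input, used in the converse direction, is that each of the seven algebras has $p$-exponent exactly $2$ (so that it really generates a \emph{minimal} variety of exponent $2$, and not one of larger exponent). I would verify this via Theorem \ref{the exponent} by computing the integer $d(A)$ of Definition \ref{definition of d} in each case: the Jacobson radical is zero for $D,\,D^i,\,D^{-i}$ and is the span of the strictly upper-triangular matrices for the four $UT_2$-type algebras, and in every case Theorem \ref{phi super simple algebras} identifies the simple $p$-subalgebras and shows that the largest $p$-admissible subalgebra has dimension $2$, so that $\exp^p(A)=d(A)=2$ by the two-sided estimate of Theorem \ref{the exponent}. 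This short case-by-case check is the only step requiring any real computation; everything else is immediate from the results already established.
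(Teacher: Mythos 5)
Your proposal is correct and follows essentially the same route as the paper, which derives the corollary directly from Corollary \ref{UT2s are APG} together with the integrality of the $p$-exponent (Theorem \ref{the exponent}) and the fact that the seven algebras have $p$-exponent $2$; you merely spell out the equivalence ``minimal of $p$-exponent $2$ $\Leftrightarrow$ almost polynomial growth and $p$-exponent $2$'' that the paper leaves implicit.
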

\medskip

\section{\bf{Characterizing algebras with $p$-exponent bounded by 2}}

We start this section by constructing a suitable finite list of $p$-algebras generating varieties of $p$-exponent $\ge 2$. 

Let us consider the following simple $p$-algebras: 

\begin{itemize}
\vspace{0.1 cm}

\item[$\bullet$] $C_1$, the superalgebra $M_{2,0}(F)$ with trivial pseudoautomorphism $\textit{id}$;
\vspace{0.2 cm}

\item[$\bullet$] $C_2$, the superalgebra $M_{2,0}(F)$ with  pseudoautomorphism $p$ given by
$
p \left( \begin{pmatrix}
  a & b\\ 
  c & d
\end{pmatrix} \right) = \begin{pmatrix}
  a & -b\\ 
  -c & d
\end{pmatrix};
$
\vspace{0.2 cm}

\item[$\bullet$] $C_3$, the superalgebra $M_{1,1}(F)$  with  pseudoautomorphism $p$ given by
$
p \left( \begin{pmatrix}
  a & b\\ 
  c & d
\end{pmatrix} \right) = \begin{pmatrix}
  a & ib\\ 
  ic & d
\end{pmatrix};
$

\vspace{0.2 cm}

\item[$\bullet$] $C_4$, the superalgebra $M_{1,1}(F)$  with  pseudoautomorphism $p$ given by
$
p \left( \begin{pmatrix}
  a & b\\ 
  c & d
\end{pmatrix} \right) = \begin{pmatrix}
  a & -ib\\ 
  -ic & d
\end{pmatrix};
$

\vspace{0.2 cm}

\item[$\bullet$] $C_5$, the superalgebra $M_{1,1}(F)$  with  pseudoautomorphism $p$ given by
$
p \left( \begin{pmatrix}
  a & b\\ 
  c & d
\end{pmatrix} \right) = \begin{pmatrix}
  d & ic\\ 
  ib & a
\end{pmatrix};
$
\vspace{0.2 cm}

\item[$\bullet$] $C_6$, the superalgebra $Q(1) \oplus \overline{Q(1)}$ with the pseudoautomorphism  $pex(a,b) = \left( (-1)^{\vert(a,b)\vert} b,a \right)$.
\vspace{0.2 cm}
\end{itemize}

By Theorem \ref{the exponent} we get the following result. 

\begin{Remark}\label{exponent of C1 C2 C3 C4}
For any $i = 1, \ldots, 6$,  $\exp^p(C_i) = 4.$
\end{Remark} 

The above $p$-algebras allow us to prove the following lemma. 

\begin{Lemma} \label{remark}
Let $B$ be a simple $p$-algebra  with $\dim_F B \geq 4$. Then $C_j \in \V^{p}(B)$, for some $j \in 	\left \{ 1,\ldots, 6 \right \}$.
\end{Lemma}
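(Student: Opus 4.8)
The plan is to reduce the statement to the classification of finite dimensional simple $p$-algebras given in Theorem \ref{phi super simple algebras} and then, case by case, to exhibit one of the $C_j$ inside the variety generated by $B$ by showing that $\Id^p(B)\subseteq \Id^p(C_j)$. Since $p$-codimensions and $p$-identities are unchanged under extension of the base field, I would first assume $F$ algebraically closed, so that $B$ is one of the five families (1)--(5) of Theorem \ref{phi super simple algebras}. The key observation is that each of the six $p$-algebras $C_1,\dots,C_6$ is a ``smallest instance'' of one of these families: $C_1$ and $C_2$ are instances of family (1) with $h=0$ (namely $M_{2,0}(F)$, with the two admissible sign choices encoded by $P=\mathrm{diag}(1,-1)$ versus $P=I_2$); $C_3$ and $C_4$ are instances of family (1) with $k=h=1$ and the two sign choices $\pm i$; $C_5$ is the instance of family (2) with $k=1$; and $C_6$ is the instance of family (4) with $n=1$. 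Family (3) and family (5) will have to be handled by showing one of the already-listed $C_j$ still embeds as a $p$-subalgebra.

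The main step is then a ``$p$-subalgebra implies subvariety'' argument: if $C$ is a $p$-subalgebra of $B$ — meaning a graded subalgebra with $p(C)=C$ — then every multilinear $p$-identity of $B$ is a $p$-identity of $C$ (because the symmetric/skew and $i$-symmetric/$i$-skew homogeneous components of $C$ sit inside those of $B$, so any admissible substitution into a $p$-polynomial using elements of $C$ is in particular an admissible substitution into $B$), and hence $C\in\V^p(B)$. So the whole lemma reduces to: \emph{every simple $p$-algebra $B$ with $\dim_F B\ge 4$ contains a copy of some $C_j$ as a $p$-subalgebra}. For the matrix families this is concrete linear algebra. In $M_{k,h}(F)$ with the pseudoautomorphism of type (1), if $k\ge 2$ one finds a $2\times 2$ block on the diagonal (inside the even part) on which $p$ restricts to $id$ or to the sign-change map, giving $C_1$ or $C_2$; if instead $\min(k,h)\ge 1$ and $(k,h)\neq(1,0)$ one finds a $2\times 2$ sub-superalgebra isomorphic to $M_{1,1}(F)$ closed under $p$, yielding $C_3$ or $C_4$ according to the sign $\pm i$. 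For family (2), $M_{k,k}(F)$ with $k\ge 1$ already contains $C_5$ as the obvious corner $M_{1,1}$ closed under the swap-type map. For family (5), $Q(n)$ with $\dim\ge 4$ means $n\ge 1$ (and when $n=1$ we are exactly in the shape of one of the $M_{1,1}$-type or $Q$-type algebras, up to the isomorphism $Q(1)\cong F\oplus cF$); here I would either identify $Q(1)$ with one of the $C_j$ directly or, for $n\ge 2$, cut down to the $n=1$ case inside a diagonal corner. For the ``exchange'' families (3) and (4), the algebra $B\oplus\bar B$ contains the $p$-subalgebra $C\oplus\bar C$ whenever $C\subseteq B$ is a graded subalgebra, so (4) immediately contains $C_6=Q(1)\oplus\overline{Q(1)}$, and (3) contains $M_{1,1}(F)\oplus\overline{M_{1,1}(F)}$ with $pex$; the latter still needs to be matched to one of $C_1,\dots,C_6$, which I would do by exhibiting inside it a $p$-subalgebra of one of the non-exchange types — e.g. a suitable ``graph of $p$'' copy of $M_{1,1}(F)$ — rather than adding it to the list.

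The dimension bookkeeping that makes $\dim_F B\ge 4$ the right threshold: among the simple $p$-algebras the ones of dimension $<4$ are precisely (up to the isomorphisms used in the classification) the algebras $D,D^i,D^{-i}$ of dimension $2$ and the one-dimensional field, so the hypothesis $\dim_F B\ge 4$ exactly rules those out and forces $B$ to contain a $4$-dimensional simple $p$-subalgebra of one of the six listed shapes. I expect the main obstacle to be the case analysis for family (3), i.e.\ $M_{k,h}(F)\oplus\overline{M_{k,h}(F)}$ with $pex$: one must argue that it does \emph{not} generate a new minimal variety but instead swallows one of $C_1,\dots,C_6$, and pinning down exactly which $C_j$ (and writing the explicit $p$-subalgebra embedding, including tracking the $(-1)^{|\cdot|}$ twist in $pex$ on the odd component) is the fiddly part; a secondary subtlety is making sure the sign choices $\pm i$ in family (1) are matched to $C_3$ versus $C_4$ correctly, which amounts to checking that no change of basis can flip the sign of $p$ on the off-diagonal part (so both genuinely occur and both are needed in the list).
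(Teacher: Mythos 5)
Your overall strategy is exactly the paper's: pass to an algebraically closed field, run through the five families of Theorem \ref{phi super simple algebras}, and in each case exhibit an explicit $p$-subalgebra isomorphic to some $C_j$ (the ``$p$-subalgebra implies subvariety'' step you spell out is correct and is used implicitly in the paper). Cases (1), (2) and (4) of your plan match the paper's proof. However, your treatment of family (5) contains a genuine error: since $\dim_F Q(n)=2n^2$, the hypothesis $\dim_F B\ge 4$ forces $n\ge 2$ (not $n\ge 1$), and more importantly the reduction target you propose --- a copy of $Q(1)$ sitting in a diagonal corner --- cannot work, because $Q(1)=F\oplus cF$ with $p(a+cb)=a\pm icb$ is a $2$-dimensional commutative $p$-algebra and hence is not isomorphic to, and does not contain, any of $C_1,\dots,C_6$ (all of which have $p$-exponent $4$). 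The correct move, which the paper makes, is to cut down to the purely even $2\times 2$ matrix corner $M_2(F)\subseteq M_n(F)\subseteq Q(n)$: this is a graded $p$-subalgebra with trivial grading on which the order-$\le 2$ automorphism $f$ acts either trivially or as the sign-change on the off-diagonal entries, yielding $C_1$ or $C_2$.

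The other unresolved point is family (3), which you flag as the obstacle but do not close. The paper settles it by splitting into two subcases: for $k>1$ the even diagonal corner $\mathrm{span}\{(e_{r,s},e_{r,s}): r,s\le 2\}$ is fixed pointwise by $pex$ (since $pex(a,a)=(a,a)$ for even $a$), giving $C_1$; for $k=h=1$ one uses the superalgebra isomorphism $M_{1,1}(F)\cong Q(1)$ (with $c=e_{1,2}+e_{2,1}$, $c^2=1$) to recognize the span of $(e_{1,1}+e_{2,2},0)$, $(e_{1,2}+e_{2,1},0)$, $(0,e_{1,1}+e_{2,2})$, $(0,e_{1,2}+e_{2,1})$ as a copy of $C_6=Q(1)\oplus\overline{Q(1)}$ with $pex$. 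This is quite different from the ``graph of $p$'' copy of $M_{1,1}(F)$ you suggest, and it is the identification with $C_6$ --- not with one of the non-exchange types --- that makes the case go through. So the skeleton of your argument is sound, but as written it would fail on family (5) and is incomplete on family (3).
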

\begin{proof}
We shall prove the lemma by constructing   a $p$-subalgebra of $B$ isomorphic to $C_j$ for some $j \in \left \{ 1,\ldots, 6 \right \}$.   

\medskip

{\bf Case 1.} $B =\left ( M_{k,h}(F), p \right )$ with
$
p\left(\begin{pmatrix}
  K & R\\ 
  S & H
\end{pmatrix}\right) = \begin{pmatrix}
  PKP & \pm i PRQ\\ 
  \pm i QSP & QHQ
\end{pmatrix}   
$, $P=\begin{pmatrix}
				I_{k_1} & 0 \\
				0 & -I_{k_2} \\
			\end{pmatrix}, Q=\begin{pmatrix}
				I_{h_1} & 0 \\
				0 & -I_{h_2} \\
			\end{pmatrix}. 
\vspace{0.2 cm}
$

Suppose first that $h = 0$. This means that the superalgebra $B$ has trivial grading and the pseudoautomorphism $p$ is just a graded automorphism. If $k_2 = 0$, $p$ is the identity map and it is immediate to see that the $p$-subalgebra of $B$ generated by the elements $ a_1 = e_{1,1}$, $a_2 = e_{1,k_1}$, $a_3 = e_{k_1,1}$, $a_4 = e_{k_1,k_1}$ is isomorphic to $C_1$. Now assume $k_2> 0$. We easily get that the $p$-subalgebra $C'$ of $B$ generated by the elements $a_1 = e_{1,1}$,  $a_2 = e_{k_1+1, k_1+1}$, $a_3 = e_{1, k_1+1}$, $a_4 = e_{k_1+1, 1}$ is isomorphic to $C_2$ through the isomorphism  $f \colon C' \rightarrow C_2$, given by
$$
f(a_1) = e_{1,1}, \ \ \ 
f(a_2) = e_{2,2}, \ \ \ 
f(a_3) = e_{1,2}, \ \ \ 
f(a_4) = e_{2,1}. \ \ \
$$

We are left to deal with the case $h > 0$. Let $C'$ be the $p$-subalgebra of $B$ generated by the elements $a_1 = e_{1,1}$,  $a_2 = e_{k+1, k+1}$, $a_3 = e_{1, k+1}$, $a_4 = e_{k+1, 1}$. The linear map $f$ given by
$$
f(a_1) = e_{1,1}, \ \ \ 
f(a_2) = e_{2,2}, \ \ \ 
f(a_3) = e_{1,2}, \ \ \ 
f(a_4) = e_{2,1}, \ \ \
$$
is an isomorphism of $p$-algebras between $C'$ and $C_3$ or $C_4$, according to the sign ($\pm i$) of the pseudoautomorphism $p$.

\medskip

{\bf Case 2.} $B =\left ( M_{k,k}(F), p \right )$ with
$
p \left( \begin{pmatrix}
  K & R\\ 
  S & H
\end{pmatrix} \right) = \begin{pmatrix}
  H & i S\\ 
   i R & K
\end{pmatrix}   
$. 
\vspace{0.2 cm}

Let $C'$ be the $p$-subalgebra of $B$ generated by $a_1 = e_{1,1}$,  $a_2 = e_{k+1, k+1}$, $a_3 = e_{1, k+1}$, $a_4 = e_{k+1, 1}$. Hence we obtain an isomorphism of $p$-algebras between $C'$ and $C_5$ via the linear map $f \colon C' \rightarrow C_5$ given by
$$
f(a_1) = e_{1,1}, \ \ \ 
f(a_2) = e_{2,2}, \ \ \ 
f(a_3) = e_{1,2}, \ \ \ 
f(a_4) = e_{2,1}. \ \ \
$$

\medskip

{\bf Case 3.} $ B=\left ( M_{k,h}(F) \oplus \overline{M_{k,h}(F)}, pex \right),$ $k\ge h \ge 0.$ 

Suppose first $k=h=1.$
The $p$-subalgebra $C'$ generated by the elements $a_1 = (e_{1,1} + e_{2,2}, 0)$, $a_2 = (e_{1,2} + e_{2,1}, 0)$, $a_3 = (0, e_{1,1} + e_{2,2})
$, $a_4 = (0, e_{1,2} + e_{2,1})$ is isomorphic to $C_6=(F\oplus cF)\oplus \overline{(F\oplus cF)}$ through the isomorphism of $p$-algebras:
 $f \colon C' \rightarrow C_6$, given by
$$
f(a_1) = (1,0), \ \ \ 
f(a_2) = (c1,0), \ \ \ 
f(a_3) = (0,1), \ \ \ 
f(a_4) = (0,c1).\ \ \
$$

Now assume $k > 1$.
In this case, we get that the $p$-subalgebra of $B$
generated by  $a_1 = (e_{1,1}, e_{1,1})$, $a_2 = (e_{2,2}, e_{2,2})$, $a_3 = (e_{1,2}, e_{1,2})$, $a_4 = (e_{2,1}, e_{2,1})$ is isomorphic to  $C_1$.

\medskip

{\bf Case 4.} $ B =\left ( Q(n) \oplus \overline{Q(n)}, pex \right) $.

If $n = 1$, then $B = C_6$ and there is nothing to prove. Now, let $n > 1$.
Then $C_1$ is isomorphic to the $p$-subalgebra of $B$ generated  by the elements $a_1 = (e_{1,1}, e_{1,1})$, $a_2 = (e_{2,2}, e_{2,2})$, $a_3 = (e_{1,2}, e_{1,2})$ and $a_4 = (e_{2,1}, e_{2,1})$.

\medskip

{\bf Case 5.} $B = Q(n)$ with pseudoautomorphism
$
p(a+cb) = f(a) \pm i cf(b)
$,
$f$ automorphism of order $\le 2$ on $M_n(F)$.

Since $\dim_F B \ge 4$, we have that $n \ge 2$. Hence we can consider the $p$-subalgebra $C'$ of $B$ generated by $a_1 = e_{1,1}$, $a_2 = e_{2,2}$, $a_3 = e_{1,2}$ and $a_4 = e_{2,1}$. $C'$ has trivial grading and induced pseudoautomorphism $p$. Since $f$ is an order $2$ automorphism of $M_n(F)$, it is well-known that it acts trivially on $M_2(F)$ or 
$$
f \left( \begin{pmatrix}
  a & b\\ 
  c & d
\end{pmatrix} \right) = \begin{pmatrix}
  a & -b\\ 
  -c & d
\end{pmatrix}.
$$
According to the action of $f$, we get a $p$-algebras isomorphism between $C'$ and $C_1$ or $C_2$ and we are done.
\end{proof}

Next we need to consider some suitable $\mathbb{Z}_2$-gradings and pseudoautomorphisms on the algebra $UT_3$ of $3 \times 3$ upper triangular matrices. Recall that an arbitrary triple $(g_1, g_2, g_{3})$ of elements of $\mathbb{Z}_2$ defines an elementary $\mathbb{Z}_2$-grading on $UT_{3}$ by setting:
$$
(UT_{3})_0=\mbox{span}\{e_{i,j} \mid g_i+g_j=0 \hspace{0.1 cm}( \mbox{mod} \hspace{0.1 cm}2) \} \ \mbox{and} \
(UT_{3})_1=\mbox{span}\{e_{i,j}  \mid g_i+g_j=1 \hspace{0.1 cm}( \mbox{mod} \hspace{0.1 cm}2)\}.
$$

On $UT_3$ we can define the following automorphisms (of order $\leq 2$):
$$
id \left( \begin{pmatrix}
  a & b & c \\ 
  0 & d & e \\
  0 & 0 & f
\end{pmatrix} \right) = 
\begin{pmatrix}
  a & b & c \\ 
  0 & d & e \\
  0 & 0 & f
\end{pmatrix},
\ \ \ \ \ \ \ \ 
{\varphi}_1 \left( \begin{pmatrix}
  a & b & c \\ 
  0 & d & e \\
  0 & 0 & f
\end{pmatrix} \right) = 
\begin{pmatrix}
  a & -b & -c \\ 
  0 & d & e \\
  0 & 0 & f
\end{pmatrix},
$$

$$
\varphi_2 \left( \begin{pmatrix}
  a & b & c \\ 
  0 & d & e \\
  0 & 0 & f
\end{pmatrix} \right) = 
\begin{pmatrix}
  a & b & -c \\ 
  0 & d & -e \\
  0 & 0 & f
\end{pmatrix},
\ \ \ \ \ \ \ \ 
\varphi_3 \left( \begin{pmatrix}
  a & b & c \\ 
  0 & d & e \\
  0 & 0 & f
\end{pmatrix} \right) = 
\begin{pmatrix}
  a & -b & c \\ 
  0 & d & -e \\
  0 & 0 & f
\end{pmatrix}.
     $$
\vspace{0.2 cm}

If $UT_3$ is endowed with trivial grading, the above automorphisms can be seen as pseudoautomorphisms. 

\medskip

Given any superalgebra $A= A_0 \oplus A_1$, one can consider the following pseudoautomorphism (recall that $i^2 = -1$) 
\begin{align*}
  p \colon A_0 \oplus A_1  &\to A_0 \oplus A_1\\
  a_0 + a_1 &\mapsto a_0 + i a_1.
\end{align*}
Notice that in case the superalgebra has trivial grading, the pseudoautomorphism $p$ is actually the identity map.

According to the result of \cite{IoppoloMartino}, it is not difficult to see that the composition between $p$ and a graded automorphism of order $\le 2$ on $UT_3$ is a pseudoautomorphism of $UT_3$. Hence we have the following $p$-algebras:
\begin{itemize}
\vspace{0.2 cm}
    \item $C_7,$ with trivial grading  and trivial pseudoautomorphism id;
    \vspace{0.2 cm}

    \item $C_8,$ with trivial grading  and pseudoautomorphism $\varphi_1$;
    \vspace{0.2 cm}

    \item $C_9,$ with trivial grading  and pseudoautomorphism $\varphi_2$;
    \vspace{0.2 cm}

    \item $C_{10},$ with trivial grading  and pseudoautomorphism $\varphi_3$;
    \vspace{0.2 cm}

    \item $C_{11},$ with  grading induced by $(0,0,1)$ and pseudoautomorphism $p$;
    \vspace{0.2 cm}

    \item $C_{12},$ with  grading induced by $(0,0,1)$ and pseudoautomorphism $p \circ \varphi_1$;
    \vspace{0.2 cm}

    \item $C_{13},$ with grading induced by $(0,0,1)$ and pseudoautomorphism $p \circ \varphi_2$;
    \vspace{0.2 cm}

    \item $C_{14},$ with grading induced by $(0,0,1)$ and pseudoautomorphism $p \circ \varphi_3$;
    \vspace{0.2 cm}

    \item $C_{15},$ with  grading induced by $(0,1,1)$ and pseudoautomorphism $p$;
    \vspace{0.2 cm}

    \item $C_{16},$ with  grading induced by $(0,1,1)$ and pseudoautomorphism $p \circ \varphi_1$;
    \vspace{0.2 cm}

    \item $C_{17},$ with grading induced by $(0,1,1)$ and pseudoautomorphism $p \circ \varphi_2$;
    \vspace{0.2 cm}

    \item $C_{18},$ with  grading induced by $(0,1,1)$ and pseudoautomorphism $p \circ \varphi_3$;
    \vspace{0.2 cm}

    \item $C_{19},$ with  grading induced by $(0,1,0)$ and pseudoautomorphism $p$;
    \vspace{0.2 cm}

    \item $C_{20},$ with  grading induced by $(0,1,0)$ and pseudoautomorphism $p \circ \varphi_1$;
    \vspace{0.2 cm}

    \item $C_{21},$ with grading induced by $(0,1,0)$ and pseudoautomorphism $p \circ \varphi_2$;
    \vspace{0.2 cm}

    \item $C_{22},$ with  grading induced by $(0,1,0)$ and pseudoautomorphism $p \circ \varphi_3$.
    \vspace{0.2 cm}
    
\end{itemize}

\begin{Remark}\label{exponent of C5 ... C20}
For $j = 7, \ldots, 22$, we have that $\exp^p(C_j) = 3$.
\end{Remark} 
\begin{proof}
All the $p$-algebras $C_j$ have the same Wedderburn-Malcev decomposition:
$$
C_j = A_1 \oplus A_2 \oplus A_3 + J, 
$$
where $A_1 = Fe_{1,1}$, $A_2 = Fe_{2,2}$, $A_3 = Fe_{3,3}$ and $J = Fe_{1,2} \oplus Fe_{1,3} \oplus Fe_{2,3}$. Since $ A_1 J A_2 J A_3 \neq 0,$  $A_1 \oplus A_2 \oplus A_3$ is  a maximal dimensional $p$-admissible subalgebra and the result follows by Theorem \ref{the exponent}.
\end{proof}

The above $p$-algebras allow us to prove the following lemma.

\begin{Lemma} \label{C5 ... C20 in var(A)}
Let $A = B_1 \oplus \cdots \oplus B_k + J$ be a finite dimensional $p$-algebra over an algebraically closed field $F$ of characteristic zero. If there exist three distinct simple components $B_{i_1} \cong B_{i_2} \cong B_{i_3} \cong F$ such that $B_{i_1} J B_{i_2} J B_{i_3} \neq 0$, then $C_j \in \V^p(A)$, for some $j \in \{ 7, \ldots, 22 \}$.
\end{Lemma}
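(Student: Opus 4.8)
The goal is to produce, inside $\V^p(A)$, a copy of one of the sixteen algebras $C_7, \dots, C_{22}$, all of which are $3\times 3$ upper triangular matrices with various elementary $\mathbb{Z}_2$-gradings and pseudoautomorphisms. The natural strategy is to build a $p$-subalgebra of $A$ isomorphic to some $C_j$ directly, using the hypothesis $B_{i_1}JB_{i_2}JB_{i_3}\neq 0$. First I would fix elements $e\in B_{i_1}$, $f\in B_{i_2}$, $g\in B_{i_3}$ — since each $B_{i_t}\cong F$, each is spanned by an idempotent, and I take $e,f,g$ to be those idempotents — together with $w_{12}\in eJf$ and $w_{23}\in fJg$ such that $w_{12}w_{23}\neq 0$; this last nonvanishing is exactly what the admissibility-type hypothesis guarantees (after possibly multiplying $w_{12}$ and $w_{23}$ on the correct sides by $e,f,g$ to make them "matrix-unit-like"). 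Set $w_{13}:=w_{12}w_{23}\neq 0$. Then the span of $e,f,g,w_{12},w_{23},w_{13}$ is a subalgebra of $A$ with the multiplication table of $UT_3$ via $e_{1,1}\mapsto e$, $e_{2,2}\mapsto f$, $e_{3,3}\mapsto g$, $e_{1,2}\mapsto w_{12}$, etc.; the only thing to check is that the products $ew_{23}$, $w_{12}g$, $w_{13}f$, and so on vanish, which follows because $e,f,g$ are orthogonal idempotents and $J\cdot J\cdot J=0$ forces the relevant triple products and cross terms to be zero, or can be arranged by replacing $w_{12}$ with $ew_{12}f$, $w_{23}$ with $fw_{23}g$.

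The next point is to understand the induced $\mathbb{Z}_2$-grading and the induced action of $p$ on this $UT_3$-copy. The idempotents $e,f,g$ lie in $A_0$ (idempotents in a superalgebra with $A^2\neq0$ are even, or at least can be chosen even in the semisimple part), and the homogeneous degrees $|w_{12}|, |w_{23}|\in\{0,1\}$ are determined by the grading of $A$, with $|w_{13}| = |w_{12}|+|w_{23}| \pmod 2$; up to symmetry this reproduces precisely one of the elementary gradings coming from a triple $(g_1,g_2,g_3)$, namely $(0,0,0)$, $(0,0,1)$, $(0,1,1)$, or $(0,1,0)$ — and these are exactly the gradings listed for $C_7$--$C_{10}$, $C_{11}$--$C_{14}$, $C_{15}$--$C_{18}$, $C_{19}$--$C_{22}$. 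As for $p$: since $p$ preserves the grading and is a pseudoautomorphism, and $e,f,g$ are even idempotents fixed (or at least stabilized) by $p$ within each simple component $B_{i_t}\cong F$ — here one uses that $p$ restricted to a simple component isomorphic to $F$ must be the identity, since the only graded-algebra endomorphisms of $F$ squaring to the identity and respecting $p^2=\mathrm{id}$ on even elements is the identity — we get $p(e)=e$, $p(f)=f$, $p(g)=g$. Then $p(w_{12})=\lambda w_{12}$, $p(w_{23})=\mu w_{23}$ for scalars $\lambda,\mu$ constrained by $p^2(w_{12})=(-1)^{|w_{12}|}w_{12}$, so $\lambda^2=(-1)^{|w_{12}|}$, i.e. $\lambda\in\{\pm1\}$ if $|w_{12}|=0$ and $\lambda\in\{\pm i\}$ if $|w_{12}|=1$, and similarly for $\mu$; and then $p(w_{13})=p(w_{12}w_{23})=(-1)^{|w_{12}||w_{23}|}\lambda\mu\, w_{13}$ is forced. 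Matching the pair $(\lambda,\mu)$ against the four automorphisms $\mathrm{id},\varphi_1,\varphi_2,\varphi_3$ (and their compositions with the canonical $p\colon a_0+a_1\mapsto a_0+ia_1$) shows the induced $p$-algebra structure is isomorphic to exactly one of $C_7,\dots,C_{22}$; a diagonal rescaling of $w_{12},w_{23}$ can normalize the scalars to the standard ones.

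The main obstacle, I expect, is the bookkeeping in the second paragraph: verifying that after the replacements $w_{12}\rightsquigarrow ew_{12}f$, $w_{23}\rightsquigarrow fw_{23}g$ one still has $w_{12}w_{23}\neq 0$ (this needs care — a priori the product could collapse, and one must exploit the hypothesis $B_{i_1}JB_{i_2}JB_{i_3}\neq0$ precisely to pick the original $w_{12},w_{23}$ with $e w_{12} f\cdot f w_{23} g \neq 0$), and that all unwanted products genuinely vanish so that the linear span really is a subalgebra and not merely a subspace. Once the subalgebra is in hand, identifying which $C_j$ it is becomes a finite case check governed by $(|w_{12}|,|w_{23}|)$ and $(\lambda,\mu)$, with the sixteen listed algebras designed to exhaust exactly these cases; then $C_j\in\V^p(A)$ follows since $\V^p$ is closed under $p$-subalgebras.
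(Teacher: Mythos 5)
Your proposal is correct and follows essentially the same route as the paper: take the orthogonal idempotents $e_1,e_2,e_3$ spanning the three components, choose homogeneous ($p$-eigenvector) radical elements $j,j'$ with $e_1je_2j'e_3\neq 0$, span the six elements $e_1,e_2,e_3,e_1je_2,e_2j'e_3,e_1je_2j'e_3$ to get a $p$-subalgebra isomorphic to $UT_3$, and match the induced grading and $p$-action against the sixteen cases $C_7,\ldots,C_{22}$. The points you flag as needing care (choosing $j,j'$ homogeneous so that the product survives and $p$ acts by scalars on $e_1je_2$, $e_2j'e_3$) are handled in the paper exactly as you suggest.
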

\begin{proof}
Let $e_1, e_2, e_3$ be the unit elements of $B_{i_1}, B_{i_2}, B_{i_3}$, respectively. Then $e_l^2 = e_l^p = e_l \in (B_{i_l})_0$ and $e_r e_s = \delta_{rs} e_r$, for $r,s, l = 1,2,3$. Since $B_{i_1} J B_{i_2} J B_{i_3} \neq 0$ then  $e_1 J e_2 J e_3 \neq 0$. So we may assume that there exist homogeneous (symmetric, skew, $i$-symmetric or $i$-skew) elements $j,j' \in J$ such that
$$
e_1 j e_2 j' e_3 \ne 0.
$$
 Consider the $p$-subalgebra $U$ of $A$ linearly generated by
$$
e_1, \ \ \ e_2, \ \ \ e_3, \ \ \ 
e_1 j e_2, \ \ \ e_2 j' e_3,  \ \ \ 
e_1 j e_2 j' e_3.
$$
The linear map $f \colon U \rightarrow UT_3$, defined by 
$$
f(e_1) = e_{1,1}, \ \ f(e_2) = e_{2,2}, \ \ f(e_3) = e_{3,3}, \ \ 
f(e_1 j e_2) = e_{1,2}, \ \ 
f(e_2 j' e_3) = e_{2,3}, \ \ 
f(e_1 j e_2 j' e_3) = e_{1,3},
$$
is an isomorphism of algebras. Now, by taking into account the homogeneous degrees of $j$ and $j'$ and their symmetry with respect to the pseudoautomorphism $p$, we get an isomorphism of $p$-algebras between $U$ and $C_j$, for some $j = 7, \ldots, 22$.
\end{proof}

Let us consider the algebra
\vspace{0.1 cm}
$$
M = \left \{ \begin{pmatrix} 
a+\alpha a' & e+\alpha e' & 0 & 0 \\
0 & b+\alpha b' & 0 & 0 \\
0 & 0 & c+\alpha c' & 0 \\
0 & 0 & f+\alpha f' & d+\alpha d' 
\end{pmatrix} \ \mid \ a, a', b, b', c, c', d, d', e, e', f, f' \in F, \ \alpha^2 = 1 \right \}
$$
and the automorphism $\dagger$ on it given by
$$
\dagger \left(
\begin{pmatrix} 
a+\alpha a' & e+\alpha e' & 0 & 0 \\
0 & b+\alpha b' & 0 & 0 \\
0 & 0 & c+\alpha c' & 0 \\
0 & 0 & f+\alpha f' & d+\alpha d' 
\end{pmatrix} \right) = \begin{pmatrix} 
d+  \alpha d' & f+  \alpha f' & 0 & 0 \\
0 & c+  \alpha c' & 0 & 0 \\
0 & 0 & b+  \alpha b' & 0 \\
0 & 0 & e+  \alpha e' & a+  \alpha a' 
\end{pmatrix}.
$$
We denote by $M_1$ the algebra $M$ such that $a' = b' = c' = d' = e' = f' = 0$, endowed with trivial grading and pseudoautomorphism $\dagger$. Instead we use the symbol $M_2$ in case the grading is the elementary one induced by $(0,1,1,0)$ and the pseudoautomorphism is $p \circ \dagger$. We need the  following $p$-algebras:
\begin{itemize}
\vspace{0.1 cm}
    \item $C_{23}$, the subalgebra of $M_1$ with $b = c$;
\vspace{0.1 cm}

\item $C_{24}$, the subalgebra of $M_1$ with $a = d$;
\vspace{0.1 cm}

    \item $C_{25}$, the subalgebra of $M_2$ with $b=c$;
\vspace{0.1 cm}

    \item $C_{26}$, the subalgebra of $M_2$ with $a = d$.
\vspace{0.1 cm}
\end{itemize}

Now notice that the algebra $M$ can be seen as a superalgebra also with the following grading:
\vspace{0.1 cm}
$$
\left \{ \begin{pmatrix} 
a & e & 0 & 0 \\
0 & b & 0 & 0 \\
0 & 0 & c & 0 \\
0 & 0 & f & d 
\end{pmatrix} \ \mid \ a,b,c,d,e,f \in F \right \} \oplus \left \{ \begin{pmatrix} 
\alpha a' & \alpha e' & 0 & 0 \\
0 & \alpha b' & 0 & 0 \\
0 & 0 & \alpha c' & 0 \\
0 & 0 & \alpha f' & \alpha d' 
\end{pmatrix} \ \mid \ a',b',c',d',e',f' \in F \right \}. 
\vspace{0.2 cm}
$$
We denote by $M_i$ the superalgebra $M$ with the pseudoautomorphism $\rho_i$ given by
$$
\rho_i \left(
\begin{pmatrix} 
a+\alpha a' & e+\alpha e' & 0 & 0 \\
0 & b+\alpha b' & 0 & 0 \\
0 & 0 & c+\alpha c' & 0 \\
0 & 0 & f+\alpha f' & d+\alpha d' 
\end{pmatrix} \right) = \begin{pmatrix} 
d+ i \alpha d' & f+ i \alpha f' & 0 & 0 \\
0 & c+ i \alpha c' & 0 & 0 \\
0 & 0 & b+ i \alpha b' & 0 \\
0 & 0 & e+ i \alpha e' & a+ i \alpha a' 
\end{pmatrix}.
\vspace{0.2 cm}
$$
Analogously, $M_{-i}$ denote the superalgebra $M$ with pseudoautomorphism $\rho_{-i}$ defined as $\rho_i$ but with $-i$ instead of $i$.

The last $p$-algebras we have to consider are the following:
\begin{itemize}
    \item $C_{27} $ is the subalgebra of $M_i$ with $a=d$, $b=c$, $a' = d'$ and $c'=b'=0$;
\vspace{0.1 cm}

\item $C_{28} $ is the subalgebra of $M_{-i}$ with $a=d$, $b=c$, $a' = d'$ and $c'=b'=0$;
\vspace{0.1 cm}

    \item $C_{29} $ is the subalgebra of $M_i$ with $a=d$, $b=c$, $b' = c'$ and $a'=d'=0$;
\vspace{0.1 cm}

    \item $C_{30} $ is the subalgebra of $M_{-i}$ with $a=d$, $b=c$, $b' = c'$ and $a'=d'=0$.
\vspace{0.1 cm}
\end{itemize}

Using the same approach as in Remark \ref{exponent of C5 ... C20}, we get the following result.

\begin{Remark} \label{exponent of C27 ... C30}
For $j = 23, \ldots, 30$, we have that $\exp^p(C_j) = 3$.
\end{Remark} 

Now we can prove the following lemma.

\begin{Lemma}  \label{C21 ... C24 in var(A)}
Assume that the field $F$ is also algebraically closed. 
Let $A = B_1 \oplus \cdots \oplus B_k + J$ be a finite dimensional $p$-algebra over $F$ such that $B_l J B_m \ne 0$ with 
$(B_l, B_m) \in  \left \{ (F, D), (D, F), (F, D^i), (D^i, F), (F, D^{-i}), (D^{-i}, F) \right \}$, $l \ne m$.
Then $C_j \in \V^p(A)$, for some $j \in \left \{ 23, \ldots, 30 \right \}$.
\end{Lemma}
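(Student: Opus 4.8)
The plan is to argue exactly as in the proofs of Lemmas \ref{remark} and \ref{C5 ... C20 in var(A)}: inside $A$ I will exhibit a $p$-subalgebra isomorphic to $C_j$ for a suitable $j\in\{23,\dots,30\}$, from which $C_j\in\V^{p}(A)$ is immediate. Throughout I use the Wedderburn--Malcev decomposition of Theorem \ref{WM} together with the splitting $J=(J\cap A_0^+)\oplus(J\cap A_0^-)\oplus(J\cap A_1^i)\oplus(J\cap A_1^{-i})$, valid in characteristic zero.

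First I organise the data. Let $u_l,u_m$ be the unit elements of $B_l,B_m$; these are even symmetric idempotents of $A$, and since $B_l,B_m$ are distinct simple components of the semisimple part they are orthogonal, so any product in which both appear must pass through $J$. The hypothesis $B_lJB_m\ne 0$ is equivalent to $u_lJu_m\ne 0$ (because $u_l,u_m$ are units of their components), and, using the decomposition of $J$ above, it lets me fix a single homogeneous element $j\in J$ of definite symmetry type with $w:=u_lju_m\ne 0$. In each of the six cases exactly one of $B_l,B_m$ is a copy of $D$, $D^{i}$ or $D^{-i}$; call it $B$ and write $B=Fu\oplus Fv$, where $u$ is its even symmetric unit and $v^2=u$, $uv=vu=v$, with $v$ even symmetric if $B\cong D$ and $v$ odd (namely $i$-skew if $B\cong D^{i}$, $i$-symmetric if $B\cong D^{-i}$).

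Now the construction. Let $U$ be the $p$-subalgebra of $A$ generated by $u_l,u_m,v$ and $w$. Multiplying $w$ by $v$ on the side occupied by $B$ produces a further radical element $w'$ (for instance $w'=u_ljv=wv$ when $B=B_m$); since $v$ is invertible in $B$ one gets $w'\ne 0$, and a short computation — every product of two radical elements vanishes because it passes through $u_mu_l=0$, and the semisimple pieces act on the radical by the obvious one-sided rules — shows that $U$ has semisimple part isomorphic to $F\oplus B$ and a square-zero radical generated, as a $B$-module, by $w$. Comparing with the explicit models $M_1,M_2,M_i,M_{-i}$, in which the element $\alpha$ (with $\alpha^2=1$) plays exactly the role of $v$, the linear map carrying the chosen generators of $U$ to the evident basis of the appropriate $C_j$ is an isomorphism of $p$-algebras. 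Which $C_j$ is reached is dictated by three pieces of data: whether $B\cong D$, $D^{i}$ or $D^{-i}$; whether $B=B_l$ or $B=B_m$, i.e.\ whether the path $u_lJu_m$ runs out of, or into, the $D$-type factor (this is precisely the dichotomy between $C_{23}$ and $C_{24}$, between $C_{25}$ and $C_{26}$, and so on); and the homogeneous degree and symmetry type of $j$, which pins down the eigenvalue ($\pm1$ or $\pm i$) of $p$ on each radical generator.

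The multiplication tables are routine; I expect the real work to be two-fold. First, the sign bookkeeping: to verify that $U$ is $p$-invariant and that the identification $U\to C_j$ intertwines the two pseudoautomorphisms, one must carry the graded signs $(-1)^{|a||b|}$ through every product $u_lju_m$, $u_ljv$, $wv$, $\dots$, and match the resulting value of $p$ on each radical generator against the corresponding entry of $\dagger$, $p\circ\dagger$, $\rho_i$ or $\rho_{-i}$. Second, and more seriously, the $D^{i}$ and $D^{-i}$ cases: there $B$ carries a nontrivial $\mathbb{Z}_2$-grading, right multiplication by $v$ toggles the degree of a radical element (so that both the even and the odd homogeneous part of $u_lJu_m$ are forced to be nonzero), and one has to check that this extra structure always lands $U$ inside $M_i$ or $M_{-i}$ — equivalently, that the eight algebras $C_{23},\dots,C_{30}$ genuinely exhaust every combination of ($D$/$D^{i}$/$D^{-i}$)$\times$(direction of the path)$\times$(degree and symmetry of $j$), and never produce a new simple-plus-radical $p$-algebra. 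That exhaustiveness check, and getting the radical of $U$ of the correct size in the $D^{\pm i}$ cases, is the heart of the argument.
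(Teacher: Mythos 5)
Your overall strategy coincides with the paper's: in each of the six cases one exhibits a $p$-subalgebra $U$ of $A$ built from the idempotents of $B_l,B_m$ and a radical element passing between them, and identifies $U$ with one of $C_{23},\dots,C_{30}$. For $B\cong D$ your construction is essentially the paper's: there the proof works with the primitive idempotents $e_1,e_2$ of $D$ (so $p(e_1)=e_2$) and the unit $e_3$ of the $F$-component, and takes $U=\mathrm{span}\{e_1,e_2,e_3,e_1je_3,e_2p(j)e_3\}$, which for your eigenvector choice of $j$ is exactly the span of $u,v,u_m,w,vw$ after the change of basis $u=e_1+e_2$, $v=e_1-e_2$. (Minor slip: $v=(1,-1)$ is even \emph{skew}, since $ex(v)=-v$, not symmetric.)

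The genuine gap is in the $D^{\pm i}$ cases, and it sits exactly at the point you defer as ``getting the radical of $U$ of the correct size''. You fix $j$ of definite symmetry type, i.e.\ a $p$-eigenvector, and describe the radical of $U$ as the cyclic $B$-module generated by $w=u_lju_m$. Since $B$ is two-dimensional and $p(w)$ is then a scalar multiple of $w$, that radical is $\mathrm{span}\{w,vw\}$, of dimension $2$, and $U$ has dimension $5$; but $C_{27},\dots,C_{30}$ have four-dimensional radicals (for $C_{27}$: $e_{1,2},\ \alpha e_{1,2},\ e_{4,3},\ \alpha e_{4,3}$) and dimension $7$, so the map you propose cannot be an isomorphism, and with only the $5$-dimensional $U$ in hand you would still owe a separate argument that $C_j\in\V^p(U)$. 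The paper's construction differs at precisely this point: it takes $j$ homogeneous only with respect to the $\mathbb{Z}_2$-grading and uses \emph{both} $j$ and $p(j)$ as independent radical directions, so that the radical of $U$ is $\mathrm{span}\{e_1je_2,\ e_1p(j)e_2,\ ce_1je_2,\ ce_1p(j)e_2\}$ --- the two extra dimensions come from applying $p$ to $j$, not from the action of $B$. (The same device already appears in the $(D,F)$ case, where the second radical generator is $e_2p(j)e_3$ rather than $e_2je_3$; there the two spans coincide for an eigenvector $j$, which is why your version survives in that case but not here.) So the heart of the argument you flag is not merely unfinished: the mechanism you propose for it (radical $=Bw$) cannot deliver the required algebras, and the correct mechanism is the $j$ versus $p(j)$ doubling.
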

\begin{proof}
Suppose first that $(B_l, B_m) =  (D, F)$. Let $e_l = e_1 + e_2$ and $e_m = e_3$ be the unit elements of $B_{l} $ and $ B_{m} $, respectively. Clearly $p(e_1) = e_2 $ and $p(e_3) = e_3$. Since $B_{l} J B_{m} \neq 0$, there exists a homogeneous element $j \in J$ such that 
$$
e_l j e_m = (e_1+e_2)je_3 \neq 0.
$$
Without loss of generality, we may assume that $e_1 j e_3 \ne 0$. Let $U$ be the $p$-algebra linearly generated by the elements 
$$
e_1, \ \ \ e_2, \ \ \ e_3, \ \ \  e_1 j e_3, \ \ \  e_2 p(j) e_3.
$$
When the homogeneous degree of $j$ is $0$, the map $f$ defined by
$$
f(e_{1}) = e_{1,1}, \ \ \ \
f(e_{2}) = e_{4,4}, \ \ \ \
f(e_{3}) = e_{2,2} + e_{3,3}, \ \ \ \
f(e_{1} j e_3) = e_{1,2}, \ \ \ \
f(e_{2} p(j) e_3) = e_{4,3},
$$
is an isomorphism of $p$-algebras between $U$ and $C_{23}$. When the homogeneous degree of $j$ is $1$, the map $f$ defined by
$$
f(e_{1}) = e_{1,1}, \ \ \ \
f(e_{2}) = e_{4,4}, \ \ \ \
f(e_{3}) = e_{2,2} + e_{3,3}, \ \ \ \
f(e_{1} j e_3) = e_{1,2}, \ \ \ \
f(e_{2} p(j) e_3) = i e_{4,3},
$$
is an isomorphism of $p$-algebras between $U$ and $C_{25}$.

In a similar way, when $(B_l, B_m) =  (F, D)$ we obtain that $C_{24}, C_{26} \in \V^p(A)$. 

Suppose now that $(B_l, B_m) =  (D^i, F)$. Let $e_l = e_1 $ and $e_m = e_2$ be the unit elements of $B_{l} $ and $ B_{m} $, respectively. Clearly $p(e_1) = e_1 $ and $p(e_2) = e_2$. Since $B_{l} J B_{m} \neq 0$, there exists a homogeneous element $j \in J$ such that 
$$
e_l j e_m = e_1je_2 \neq 0.
$$
Let $U$ be the $p$-algebra linearly generated by the elements 
$$
e_1, \ \ \ e_2, \ \ \ ce_1, \ \ \  e_1 j e_2, \ \ \  e_1 p(j) e_2, \ \ \  ce_1 j e_2, \ \ \  ce_1 p(j) e_2.
$$
Then $U$ is isomorphic to $C_{27}$ as $p$-algebras, via $f$ defined by
\begin{align*}
&f(e_1) = e_{1,1} + e_{4,4}, \ \ \ \
f(e_2) = e_{2,2} + e_{3,3}, \ \ \ \
f(ce_1) = c(e_{1,1} + e_{4,4}), \ \ \ \
f(e_1 j e_2) = e_{1,2}, \\
&f(e_1 p(j) e_2) = e_{4,3}, \ \ \ \ 
f(ce_1 j e_2) = ce_{1,2}, \ \ \ \ \ 
f(ce_1 p(j) e_2) = ce_{4,3},
\end{align*}
when the homogeneous degree of $j$ is $0$, and $f$ defined by
\begin{align*}
&f(e_1) = e_{1,1} + e_{4,4}, \ \ \ \
f(e_2) = e_{2,2} + e_{3,3}, \ \ \ \
f(ce_1) = c(e_{1,1} + e_{4,4}), \ \ \ \
f(e_1 j e_2) = ce_{1,2}, \\
&f(e_1 p(j) e_2) = ice_{4,3}, \ \ \ \ 
f(ce_1 j e_2) = e_{1,2}, \ \ \ \ 
f(ce_1 p(j) e_2) = ie_{4,3},
\end{align*}
when the homogeneous degree of $j$ is $1$.

In a similar way, when $(B_l, B_m) =  (D^{-i}, F)$, $(F, D^i)$ or $(F, D^{-i})$, we obtain that $C_{28}$, $C_{29}$ or $C_{30} \in \V^p(A)$, respectively.
\end{proof}

The following proposition proves that the list of $p$-algebras $C_1, \ldots, C_{30}$ cannot be reduced.

\begin{Proposition} \label{list cannot be reduced}
For all $l, j \in \left \{ 1, \ldots, 30 \right \}	$, $l \neq j$, $\Id^p(C_l) \nsubseteq \Id^p(C_j)$.
\end{Proposition}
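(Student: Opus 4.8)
The statement is equivalent to $C_j\notin\V^p(C_l)$ for all $l\neq j$, and the plan is to exhibit, for each ordered pair $(l,j)$, a multilinear $p$-polynomial lying in $\Id^p(C_l)\setminus\Id^p(C_j)$. First I would split the thirty algebras by $p$-exponent: by Remarks \ref{exponent of C1 C2 C3 C4}, \ref{exponent of C5 ... C20}, \ref{exponent of C27 ... C30} one has $\exp^p(C_i)=4$ for $i\le 6$ and $\exp^p(C_i)=3$ for $i\ge 7$. If $\exp^p(C_j)>\exp^p(C_l)$, then $\Id^p(C_l)\subseteq\Id^p(C_j)$ would force $\V^p(C_j)\subseteq\V^p(C_l)$, hence $c_n^p(C_j)\le c_n^p(C_l)$ and $\exp^p(C_j)\le\exp^p(C_l)$, a contradiction; this disposes of every pair with $l\ge 7$, $j\le 6$. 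It remains to treat (A) $l,j\le 6$; (B) $l,j\ge 7$; (C) $l\le 6<j$.

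For (A) and most of (C) I would use a short toolbox of separating polynomials read off from invariants. Attach to each $p$-algebra $C$ the dimension vector $\big(\dim_F C_0^+,\dim_F C_0^-,\dim_F C_1^{i},\dim_F C_1^{-i}\big)$; if a coordinate of this vector vanishes for $C_l$ but not for $C_j$, the corresponding single variable $y^-,z^+$ or $z^-$ lies in $\Id^p(C_l)\setminus\Id^p(C_j)$. If $(C_l)_0^+$ is commutative (indeed central when $\dim_F(C_l)_0^+=1$, as for $C_5$) while $(C_j)_0^+$ is not, a commutator such as $[y_1^+,y_2^+]$, $[y_1^+,z_1^+]$ or $[y_1^+,y_1^-]$ separates them. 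Finally, for every pair with $l\le 6$ and $7\le j\le 22$, whose underlying ordinary algebra is $UT_3$, the Amitsur--Levitzki polynomial $s_4$, taken in the four variable-types carried by the homogeneous elements $e_{1,1},e_{1,2},e_{2,2},e_{2,3}$ of $C_j$, is a $p$-identity of $C_l$ (whose underlying algebra is $M_2(F)$ or is $\cong F\oplus F\oplus F\oplus F$, both satisfying $s_4$ in all variables) but not of $C_j$, because $s_4(e_{1,1},e_{1,2},e_{2,2},e_{2,3})=e_{1,3}\neq 0$ in $UT_3$. Comparing the six dimension vectors of $C_1,\dots,C_6$ shows that the only pair in (A) not covered by the first two tricks is $\{C_5,C_6\}$ (both have vector $(1,1,1,1)$): $C_6$ is commutative, so $[y_1^-,z_1^+]\in\Id^p(C_6)\setminus\Id^p(C_5)$, while in $C_5$ the one-dimensional spaces $(C_5)_0^-$ and $(C_5)_1^{i}$ are spanned by anticommuting elements, so $y_1^-z_1^++z_1^+y_1^-\in\Id^p(C_5)\setminus\Id^p(C_6)$. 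The pairs with $l\le 6$ and $23\le j\le 30$ (where $s_4$ is unavailable, the underlying algebras being subalgebras of $UT_2(F)\times UT_2(F)$, $M$ or $M_{\pm i}$) are again covered by the dimension-vector and commutativity alternatives, which I would check case by case using that such $C_j$ have dimension $\ge 5$ and a non-commutative symmetric part.

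Case (B) is the substantial one. Each $C_i$ with $i\ge 7$ has a Wedderburn--Malcev decomposition $C_i=B_1\oplus\cdots\oplus B_s+J$ with every $B_t$ isomorphic to one of $F,D,D^i,D^{-i}$, and the $p$-isomorphism class of $C_i$ is determined by: (a) its $\mathbb Z_2$-grading; (b) the multiset of the $B_t$ together with the list of nonzero products $B_{t_1}JB_{t_2}J\cdots JB_{t_r}$; (c) the homogeneous degree and the pseudoautomorphism eigenvalue ($\pm1$ or $\pm i$) of each matrix unit generating $J$, namely $e_{1,2},e_{1,3},e_{2,3}$ in the $UT_3$-case and $e_{1,2},e_{4,3}$ in the $M$-case. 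Given distinct $C_l,C_j$ in this range, they differ in at least one datum among (a)--(c), and I would detect each kind of discrepancy by a short multilinear $p$-polynomial: a difference in the grading by a monomial in $z$-variables that is an identity of exactly one of them; a difference in which iterated product $B_{t_1}J\cdots JB_{t_r}$ survives by the corresponding monomial of degree $\le s+r-1$; and a difference in the type of a radical generator $e_{r,s}$ by a monomial $y_1^+\,w\,y_3^+$, with $y_1^+,y_3^+$ specialized to the idempotents $e_{r,r},e_{s,s}$ and $w$, of the relevant type, specialized to $e_{r,s}$, compared with its sign-twisted version. The seven almost-polynomial-growth algebras $UT_2,UT_2^-,UT_2^i,UT_2^{-i},D,D^i,D^{-i}$ of Corollary \ref{UT2s are APG} are also useful here: since their $T$-ideals are pairwise incomparable, the subset of them lying in $\V^p(C_i)$ is an invariant of $C_i$ that separates many of the pairs.

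The hard part will be the remainder of Case (B): the finitely many pairs sharing \emph{every} coarse invariant above --- for instance $C_8,C_9,C_{10}$, which have dimension vector $(4,2,0,0)$ and differ only in which off-diagonal unit of $UT_3$ is symmetric, or $C_{23},C_{24}$ and $C_{25},C_{26}$, which differ only in the direction in which $J$ links the two simple components. For these I would need a monomial that detects the \emph{position} of the distinguished radical generator within the chain of idempotents, not merely its type; since $\dim_F C_i\le 7$, the cleanest systematic way to finish is to compute, from the explicit matrix models, a basis of $P^p_n/\big(P^p_n\cap\Id^p(C_i)\big)$ for all $i\ge 7$ and all $n\le 7$, and to read off a polynomial that survives on one algebra and vanishes on the other --- a bounded though laborious verification. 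Assembling the separating polynomials obtained over all ordered pairs then yields $\Id^p(C_l)\nsubseteq\Id^p(C_j)$ for every $l\neq j$.
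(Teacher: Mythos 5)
Your overall strategy --- exhibit, for each ordered pair, an explicit multilinear $p$-polynomial in $\Id^p(C_l)\setminus\Id^p(C_j)$, after first disposing of the pairs $l\ge 7$, $j\le 6$ by comparing $p$-exponents --- is exactly the strategy of the paper, and the witnesses you actually write down are correct: the single-variable separations coming from the vanishing of a component of $C=C_0^+\oplus C_0^-\oplus C_1^{i}\oplus C_1^{-i}$, the commutator $[y_1^+,y_2^+]$, the standard polynomial $s_4$ in the appropriate variable types (the paper uses the Hall identity $\left[[x_1,x_2]^2,x_3\right]$ for the same pairs), and the pair $y^-z^++z^+y^-$ versus $[y^-,z^+]$ separating $C_5$ from $C_6$.

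The problem is that the bulk of the proposition is left undone. For $l\le 6$ and $23\le j\le 30$ you only assert that the dimension-vector and commutativity tricks ``cover'' these pairs, and for the ordered pairs inside $\{7,\dots,30\}$ --- where essentially all of the work lies --- you give no witnesses at all: you list invariants (a)--(c) that distinguish the isomorphism classes, but distinguishing isomorphism classes does not yield mutual non-containment of $T$-ideals, and the concrete polynomials detecting which radical generator has which homogeneous degree and eigenvalue, and in which position relative to the idempotents, are precisely what the proof must supply. Your fallback of computing $P_n^p/(P_n^p\cap\Id^p(C_i))$ for all $n\le 7$ is not a proof either: you give no argument that a separating multilinear polynomial, when one exists, must occur in degree $\le 7$, so the search could terminate without deciding a pair. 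The paper closes exactly these cases with explicit degree-$\le 4$ witnesses --- $y_1^-y_2^-y_3^-$ for $l\in\{7,\dots,22\}$ against $j\in\{23,\dots,26\}$; $[y_1^-,y_2^-]y_3^-$ versus $y_3^-[y_1^-,y_2^-]$ for $C_{23},C_{24}$; $z^+y^-$ versus $y^-z^+$ for $C_{25},C_{26}$; $y_1^-y_2^-$, $z_1^+z_2^+$, $z_1^-z_2^-$, $z^+z^-$, $z^-z^+$ for $C_{27},\dots,C_{30}$; and $[x_1,x_2][x_3,x_4]$ for $l\in\{23,\dots,30\}$ against $j\le 22$ --- together with a citation to the classification of pseudoautomorphisms on $UT_n$ for the pairwise inequivalence of $C_7,\dots,C_{22}$. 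To complete your argument you would need to produce witnesses of this explicit kind for every residual ordered pair rather than appeal to an unjustified bounded search.
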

\begin{proof}
By the classification of pseudoautomorphisms on $UT_n$ given in \cite{IoppoloMartino}, we have that the $p$-algebras $C_i$, $i = 7, \ldots, 22$ are not pairwise equivalent. Now, let $y$ denote an even variable, $z$ an odd one and $x$ any variable. The proof is completed by putting together the following facts.
\begin{itemize}

    \vspace{0.1 cm}
    \item $\Id^{p}(C_l) \not\subseteq \Id^{p}(C_j)$, $l=1, 3, 4, 7, 11, 13, 15, 16, 19, 22$, $j \neq l$: in fact $y^- \equiv 0$ on $C_l$ but not on $C_j$.
    \vspace{0.1 cm}



    \item $\Id^{p}(C_l) \not\subseteq \Id^{p}(C_j)$, $l=1,2, 7, 8, 9, 10, 23,24$, $j \neq l$: in fact $z \equiv 0$ on $C_l$ but not on $C_j$.
    \vspace{0.1 cm}

    \item $\Id^{p}(C_l) \not\subseteq \Id^{p}(C_j)$, $l=3$, $j = 4$: in fact $z^- \equiv 0$ on $C_l$ but not on $C_j$.
    \vspace{0.1 cm}

    \item $\Id^{p}(C_l) \not\subseteq \Id^{p}(C_j)$, $l=4$, $j = 3$: in fact $z^+ \equiv 0$ on $C_l$ but not on $C_j$.
    \vspace{0.1 cm}


    \item $\Id^{p}(C_l) \not\subseteq \Id^{p}(C_j)$, $l=2, \ldots, 6, 12, 14, 17, 18, 20, 21, 25, 26$, $j \ne l$: in fact $[y_1^+, y_2^+] \equiv 0$ on $C_l$ but not on $C_j$.
    \vspace{0.1 cm}

    \item $\Id^{p}(C_l) \not\subseteq \Id^{p}(C_j)$, $l=1, \ldots, 5$, $j= 7, \ldots, 22$,: in fact $\left[ [x_1, x_2]^2, x_3 \right] \equiv 0$ on $C_l$ but not on $C_j$.
    \vspace{0.1 cm}

    \item $\Id^{p}(C_l) \not\subseteq \Id^{p}(C_j)$, $l=5$, $j= 2, 3, 4, 25, 26$: in fact $[y^+, x] \equiv 0$ on $C_l$ but not on $C_j$.
    \vspace{0.1 cm}

    \item $\Id^{p}(C_l) \not\subseteq \Id^{p}(C_j)$, $l=5$, $j= 6$: in fact $y^- z^- + z^- y^- \equiv 0$ on $C_l$ but not on $C_j$.
    \vspace{0.1 cm}


    \item $\Id^{p}(C_l) \not\subseteq \Id^{p}(C_j)$, $l=6$, $j=1, \ldots, 5, 7 \ldots, 30$: in fact $[x_1, x_2] \equiv 0 $ on $C_l$ but not on $C_j$.
    \vspace{0.1 cm}







    \item $\Id^{p}(C_l) \not\subseteq \Id^{p}(C_j)$, $l=7, \ldots, 30$, $j=1, \ldots, 6$: in fact $ \exp^p(C_l) < \exp^p(C_j)$.
    \vspace{0.1 cm}

    \item $\Id^{p}(C_l) \not\subseteq \Id^{p}(C_j)$, $l=6, 23, \ldots, 30$, $j=1, \ldots, 5, 7, \ldots, 22$: in fact $ [x_1,x_2][x_3,x_4] \equiv 0$ on $C_l$ but not on $C_j$.
    \vspace{0.1 cm}


    \item $\Id^{p}(C_l) \not\subseteq \Id^{p}(C_j)$, $l=7, \ldots, 22$, $j=23,\ldots, 26$: in fact $ y_1^- y_2^- y_3^- \equiv 0$ on $C_l$ but not on $C_j$.
    \vspace{0.1 cm}







        \item $\Id^{p}(C_l) \not\subseteq \Id^{p}(C_j)$, $l=23$, $j=24$: in fact $[y_1^-, y_2^-]y_3^- \equiv 0 $ on $C_l$ but not on $C_j$.
    \vspace{0.1 cm}

    \item $\Id^{p}(C_l) \not\subseteq \Id^{p}(C_j)$, $l=24$, $j=23$: in fact $y_3^- [y_1^-, y_2^-] \equiv 0$ on $C_l$ but not on $C_j$.
    \vspace{0.1 cm}


    \item $\Id^{p}(C_l) \not\subseteq \Id^{p}(C_j)$, $l=25$, $j=26$: in fact $z^+ y^- \equiv 0$ on $C_l$ but not on $C_j$.
    \vspace{0.1 cm}

    \item $\Id^{p}(C_l) \not\subseteq \Id^{p}(C_j)$, $l=26$, $j=25$: in fact $ y^- z^+ \equiv 0$ on $C_l$ but not on $C_j$.
    \vspace{0.1 cm}

    \item $\Id^{p}(C_l) \not\subseteq \Id^{p}(C_j)$, $l=27, \ldots, 30$, $j=23, \ldots, 26$: in fact $ y_1^- y_2^- \equiv 0$ on $C_l$ but not on $C_j$.
    \vspace{0.1 cm}

    \item $\Id^{p}(C_l) \not\subseteq \Id^{p}(C_j)$, $l=28, 30$, $j=27, 29$: in fact $ z_1^+ z_2^+ \equiv 0$ on $C_l$ but not on $C_j$.
    \vspace{0.1 cm}

    \item $\Id^{p}(C_l) \not\subseteq \Id^{p}(C_j)$, $l=27, 29$, $j=28, 30$: in fact $ z_1^- z_2^- \equiv 0$ on $C_l$ but not on $C_j$.
    \vspace{0.1 cm}

    \item $\Id^{p}(C_l) \not\subseteq \Id^{p}(C_j)$, $l=27, 30$, $j=28, 29$: in fact $ z^- z^+ \equiv 0$ on $C_l$ but not on $C_j$.
    \vspace{0.1 cm}

    \item $\Id^{p}(C_l) \not\subseteq \Id^{p}(C_j)$, $l=28, 29$, $j=27, 30$: in fact $ z^+ z^- \equiv 0$ on $C_l$ but not on $C_j$.
  
\end{itemize}
\end{proof}

Now we are in a position to characterize the $p$-algebras $A$ with $\exp^p(A) \leq 2$.

\begin{Theorem} \label{characterization algebras with exponent greater than two}
Let $A$ be a finite dimensional $p$-algebra over $F$. Then $\exp^p(A) \leq 2 $ if and only if $C_j \notin \V^p(A)$, for any $j \in 	\left \{ 1, \ldots, 30 \right \}	$.
\end{Theorem}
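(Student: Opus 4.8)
The plan is to prove both implications by relating membership of the $C_j$'s to the numerical invariant $d=d(A)$ of Definition \ref{definition of d}, which by Theorem \ref{the exponent} equals $\exp^p(A)$. The forward direction is immediate: each $C_j$ satisfies $\exp^p(C_j)\ge 3$ by Remarks \ref{exponent of C1 C2 C3 C4}, \ref{exponent of C5 ... C20}, \ref{exponent of C27 ... C30}, and $\exp^p$ is monotone under the relation $\V^p(C_j)\subseteq\V^p(A)$ (equivalently $\Id^p(A)\subseteq\Id^p(C_j)$), since $c_n^p(C_j)\le c_n^p(A)$. Hence if $\exp^p(A)\le 2$ then no $C_j$ can lie in $\V^p(A)$.

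For the converse I would argue contrapositively: assume $\exp^p(A)=d\ge 3$ and produce some $C_j\in\V^p(A)$. First reduce to $F$ algebraically closed (the $p$-codimensions, hence $\exp^p$, and the property ``$C_j\in\V^p(A)$'' are unaffected by scalar extension). Write the Wedderburn--Malcev decomposition $A=B_1\oplus\cdots\oplus B_k+J$ of Theorem \ref{WM}, and pick a $p$-admissible subalgebra $C=C_{i_1}\oplus\cdots\oplus C_{i_h}$ with $\dim_F C=d\ge 3$ realizing the maximum, so that $C_{i_1}JC_{i_2}J\cdots JC_{i_h}\ne 0$. There are two cases. \textbf{Case A:} some simple component $B_{i_r}$ appearing in $C$ has $\dim_F B_{i_r}\ge 4$. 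Then by Lemma \ref{remark} some $C_j$, $j\in\{1,\dots,6\}$, lies in $\V^p(B_{i_r})\subseteq\V^p(A)$, and we are done. \textbf{Case B:} every simple component in $C$ has dimension $\le 3$. Since $F$ is algebraically closed, inspecting the classification Theorem \ref{phi super simple algebras}, a $p$-simple algebra of dimension $\le 3$ is (up to equivalence) $F$, $D$, $D^i$, or $D^{-i}$ (these are the only $p$-simple algebras of dimension $1$ or $2$; dimension $3$ does not occur). Then $\dim_F C=d\ge 3$ forces at least one of the following configurations inside the admissible chain:
\begin{itemize}
\item three components $\cong F$ with $B_{i_1}JB_{i_2}JB_{i_3}\ne 0$, whence $C_j\in\V^p(A)$ for some $j\in\{7,\dots,22\}$ by Lemma \ref{C5 ... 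C20 in var(A)};
\item a pair of distinct components forming $(F,D),(D,F),(F,D^i),(D^i,F),(F,D^{-i})$ or $(D^{-i},F)$ with $B_lJB_m\ne 0$, whence $C_j\in\V^p(A)$ for some $j\in\{23,\dots,30\}$ by Lemma \ref{C21 ... C24 in var(A)}.
\end{itemize}
To see that one of these must occur, note that each of $D,D^i,D^{-i}$ already has $p$-exponent $2$, so if $C$ contains one of them and has $d\ge 3$ it must contain a further simple component $B_m$ with a nonzero connecting product $B_lJB_m$ or $B_mJB_l$; that further component, being of dimension $\le 3$, is $\cong F$ or one of $D,D^i,D^{-i}$, and in the latter subcase one extracts a $D$-type pair sitting over a single $F$ inside it (or directly an $(F,D)$-type pair using the idempotents), landing in the second configuration. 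If instead $C$ contains no copy of $D,D^i,D^{-i}$, then all $h\ge 3$ components are $\cong F$ and the chain gives the first configuration.

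The main obstacle is the bookkeeping in Case B: carefully enumerating, from Theorem \ref{phi super simple algebras} over an algebraically closed field, all $p$-simple algebras of dimension $\le 3$ and verifying that the admissibility condition $C_{i_1}J\cdots JC_{i_h}\ne 0$ with $\dim_F C\ge 3$ always yields either three linked $F$'s or one of the six linked mixed pairs $(F,D^{\bullet})$ — in particular checking that when a copy of $D$ (or $D^{\pm i}$) is linked to another $D$-type block one can always pass to an honest $(D^{\bullet},F)$ or $(F,D^{\bullet})$ pair by restricting to a suitable idempotent of the second block, so that Lemma \ref{C21 ... C24 in var(A)} applies. The role of Proposition \ref{list cannot be reduced} here is only to confirm the list is irredundant; it is not needed for the characterization itself. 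Once the casework is complete, the contrapositive is established and the theorem follows.
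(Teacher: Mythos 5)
Your argument is correct and follows essentially the same route as the paper: the forward implication by monotonicity of the exponent, and the converse via the Wedderburn--Malcev decomposition, Lemma \ref{remark} to exclude simple components of dimension $\geq 4$, and Lemmas \ref{C5 ... C20 in var(A)} and \ref{C21 ... C24 in var(A)} for the remaining case analysis (the paper argues by contradiction rather than contrapositively, and bounds all simple components at once rather than only those in a maximal admissible subalgebra, but this is the same argument). The one point you single out as delicate --- reducing two linked $D$-type blocks to one of the six mixed pairs so that Lemma \ref{C21 ... C24 in var(A)} applies --- is dispatched in the paper with the same brevity (``it easily follows that one of the last two cases occurs''), so your treatment matches the paper's level of detail there as well.
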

\begin{proof}
Since we are dealing with $p$-codimensions that do not change by extending the base field, in what follows we may assume that the field $F$ is algebraically closed.

First let $\exp^p(A) \leq 2 $. Since $\exp^p(C_i)>2$, by Remarks \ref{exponent of C1 C2 C3 C4}, \ref{exponent of C5 ... C20} and \ref{exponent of C27 ... C30}, we get  $C_j \notin \V^p(A)$,  $j \in 	\left \{ 1, \ldots, 30 \right \}$.

Conversely, let $C_j \notin \V^p(A)$, for any $j \in \left \{ 1, \ldots, 30 \right \}$.   Hence by Theorem \ref{WM}  we can write $A = B_1 \oplus \cdots \oplus B_m + J$, where the $B_j$'s are simple $p$-algebras isomorphic to those ones given in Theorem \ref{phi super simple algebras}. Since $C_1, \ldots, C_6 \not \in \V^p(A)$, according to Lemma \ref{remark}, we have that $\dim_F B_l < 4$, for any $l$.

Suppose by contradiction that $\exp^p(A) > 2$. Then by Theorem \ref{the exponent}, one of the following possibilities occurs:
\begin{itemize}
\item[1.] there exist distinct $B_{i_1}, B_{i_2}, B_{i_3}$ such that $B_{i_1} J B_{i_2} J B_{i_3} \neq 0$ and $B_{i_1} \cong B_{i_2} \cong B_{i_3} \cong F$,
\vspace{0.1 cm}

\item[2.] for some $i_1 \neq i_2$, $B_{i_1} J B_{i_2} \neq 0$ and $B_{i_1} \cong F $ and $ B_{i_2} \cong D$ or $D^i$ or $D^{-i}$,
\vspace{0.1 cm}

\item[3.] for some $i_1 \neq i_2$, $B_{i_1} J B_{i_2} \neq 0$ and $B_{i_1} \cong D $ or $D^i$ or $D^{-i}$ and $ B_{i_2} \cong F$.
\vspace{0.1 cm}


\end{itemize}

Notice that when we are in the situation $B_{i_1} J B_{i_2} \ne 0$ with $B_{i_1} \cong D $ or $D^i$ or $D^{-i}$ and $ B_{i_2} \cong D$ or $D^i$ or $D^{-i}$, it easily follows that one of the last two cases occurs. 

If $1.$ holds, then, by Lemma \ref{C5 ... C20 in var(A)}, $C_j \in \V^p(A)$, for some $j \in \left \{ 7, \ldots, 22 \right \}$, a contradiction.
We reach a contradiction also in all the other cases, since by Lemma \ref{C21 ... C24 in var(A)}, we should have that $C_j \in \V^p(A)$, for some $j \in \left \{ 23,\ldots, 30 \right \}$. 
\end{proof}

In light of Theorems \ref{cn(A) pol bounded} and \ref{characterization algebras with exponent greater than two}, we get the characterization of $p$-algebras with $p$-exponent equal to two.

\begin{Corollary} \label{characterization algebras with exponent two}
Let $A$ be a finite dimensional $p$-algebra over $F$. Then $\exp^p(A) = 2 $ if and only if 
\begin{itemize}
    \item[-] $C_j \not \in \V^p(A)$, for all $j \in 	\left \{ 1, \ldots, 30 \right \}$ and
    \vspace{0.1 cm}
    \item[-] either $UT_2$ or $UT_2^{-}$ or $UT_2^{i}$ or $UT_2^{-i}$ or $D$ or $D^i$ or $D^{-i} \in \V^p(A)$.
\end{itemize}
\end{Corollary}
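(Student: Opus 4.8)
The plan is to deduce this corollary directly from the two characterisations already established, using only the fact from Theorem~\ref{the exponent} that $\exp^p(A)$ is a well-defined non-negative integer. Because the $p$-exponent is an integer, the statement ``$\exp^p(A)=2$'' is equivalent to the conjunction ``$\exp^p(A)\le 2$ and $\exp^p(A)\ge 2$'', and the second condition is precisely the negation of ``$\exp^p(A)\le 1$''. So the whole argument reduces to splicing together the ``$\le 2$'' and ``$\le 1$'' dichotomies.

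First I would apply Theorem~\ref{characterization algebras with exponent greater than two}, which says that $\exp^p(A)\le 2$ if and only if $C_j\notin\V^p(A)$ for every $j\in\{1,\dots,30\}$. This yields exactly the first bullet of the statement. As in that theorem one may freely pass to the algebraic closure of $F$, since $p$-codimensions are insensitive to extension of scalars, so no extra hypothesis on $F$ is needed.

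Next I would apply Theorem~\ref{cn(A) pol bounded}, which says that $c_n^p(A)$ is polynomially bounded, i.e.\ $\exp^p(A)\le 1$, if and only if none of $UT_2$, $UT_2^-$, $UT_2^i$, $UT_2^{-i}$, $D$, $D^i$, $D^{-i}$ lies in $\V^p(A)$. Taking the contrapositive, $\exp^p(A)\ge 2$ if and only if at least one of these seven $p$-algebras belongs to $\V^p(A)$; this is the second bullet. Combining the two equivalences gives ``$\exp^p(A)=2$ iff ($C_j\notin\V^p(A)$ for all $j$) and (one of the seven almost-polynomial-growth algebras is in $\V^p(A)$)'', which is the assertion.

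I do not expect a genuine obstacle here: all the substantive work lies in Theorems~\ref{cn(A) pol bounded} and~\ref{characterization algebras with exponent greater than two} and in Lemmas~\ref{remark}, \ref{C5 ... C20 in var(A)} and~\ref{C21 ... C24 in var(A)}. The only point worth spelling out explicitly is the use of integrality of $\exp^p(A)$, since that is what makes ``$=2$'' break cleanly into ``$\le 2$'' together with ``not $\le 1$''; without it one would only get a two-sided inequality rather than an exact value.
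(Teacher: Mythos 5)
Your proposal is correct and is exactly the argument the paper intends: the corollary is stated there as an immediate consequence of Theorems \ref{cn(A) pol bounded} and \ref{characterization algebras with exponent greater than two}, combined via the integrality of $\exp^p(A)$ from Theorem \ref{the exponent}. You have merely made explicit the splicing of the two dichotomies that the paper leaves implicit.
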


Now let us  slightly change the definition of minimal varieties given at the end of Section \ref{Section 3}: a variety  $\mathcal V$ of $p$-algebras
is minimal with respect to the $p$-exponent if for any proper subvariety  $\mathcal U,$ generated by a  finite dimensional $p$-algebra, 
we have that $\exp^{p}(\mathcal{V})>\exp^{p}(\mathcal{U}).$ 
 By using this definition we get the following.
\begin{Corollary} \quad
\begin{itemize}
    \item[1.] The $p$-algebras $ C_j$,  $j=1, \ldots, 6, $  generate minimal varieties of $p$-exponent $4$.
    \vspace{0.1 cm}
    \item[2.] The $p$-algebras $ C_j$,  $j=7, \ldots, 30, $ are the only finite dimensional algebras, up to equivalence, generating minimal varieties of $p$-exponent $3$.
    \end{itemize}
    \end{Corollary}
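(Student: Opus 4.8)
The plan is to read off both assertions from the material already assembled; the linchpin is a single observation that I would record first. \emph{Observation:} if $A$ is a finite dimensional $p$-algebra and $\V^p(A)$ is a \emph{proper} subvariety of $\V^p(C_j)$ for some $j\in\{1,\dots,30\}$, then $\exp^p(A)\le 2$. To see this, note that $\V^p(A)\subsetneq\V^p(C_j)$ says exactly $\Id^p(C_j)\subsetneq\Id^p(A)$. If some $C_l$ with $l\in\{1,\dots,30\}$ belonged to $\V^p(A)$, then $\Id^p(C_j)\subseteq\Id^p(A)\subseteq\Id^p(C_l)$; by Proposition~\ref{list cannot be reduced} this forces $l=j$, and then $\Id^p(A)\subseteq\Id^p(C_j)\subsetneq\Id^p(A)$, a contradiction. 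Hence $C_l\notin\V^p(A)$ for every $l\in\{1,\dots,30\}$, and Theorem~\ref{characterization algebras with exponent greater than two} gives $\exp^p(A)\le 2$. (As usual I would first replace $F$ by its algebraic closure, which affects none of the invariants involved.)

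From here the minimality claims are immediate. By Remarks~\ref{exponent of C1 C2 C3 C4}, \ref{exponent of C5 ... C20} and \ref{exponent of C27 ... C30}, $\exp^p(C_j)=4$ for $j=1,\dots,6$ and $\exp^p(C_j)=3$ for $j=7,\dots,30$. If $\mathcal U=\V^p(A)$ is a proper subvariety of $\V^p(C_j)$ with $A$ finite dimensional, the Observation gives $\exp^p(\mathcal U)\le 2$, strictly below $\exp^p(\V^p(C_j))$ in both ranges of $j$. Thus each $\V^p(C_j)$ is minimal with respect to the $p$-exponent — of $p$-exponent $4$ when $j\le 6$, of $p$-exponent $3$ when $j\ge 7$ — which is assertion~1 together with the ``existence'' half of assertion~2; moreover the $24$ varieties $\V^p(C_7),\dots,\V^p(C_{30})$ are pairwise distinct by Proposition~\ref{list cannot be reduced}.

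For the ``only'' half of assertion~2, I would take any minimal variety $\mathcal V=\V^p(A)$ of $p$-exponent $3$ with $A$ finite dimensional. Since $\exp^p(A)=3>2$, Theorem~\ref{characterization algebras with exponent greater than two} supplies $j\in\{1,\dots,30\}$ with $C_j\in\V^p(A)$; then $\V^p(C_j)\subseteq\V^p(A)$, so $c_n^p(C_j)\le c_n^p(A)$ for all $n$ and hence $\exp^p(C_j)\le\exp^p(A)=3$. By Remark~\ref{exponent of C1 C2 C3 C4} this excludes $j\le 6$, so $j\in\{7,\dots,30\}$ and $\exp^p(C_j)=3$. Now $\V^p(C_j)$ is a subvariety of the minimal variety $\mathcal V$, generated by the finite dimensional $p$-algebra $C_j$ and of the same $p$-exponent as $\mathcal V$; by minimality it cannot be proper, so $\V^p(C_j)=\mathcal V$ and $A$ is equivalent to $C_j$. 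Combined with the previous paragraph, this shows that $C_7,\dots,C_{30}$ are, up to equivalence, precisely the finite dimensional $p$-algebras generating minimal varieties of $p$-exponent $3$.

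The one place needing care is the Observation's reliance on Theorem~\ref{characterization algebras with exponent greater than two}, and thereby on Lemmas~\ref{remark}, \ref{C5 ... C20 in var(A)} and \ref{C21 ... C24 in var(A)}: one uses that no finite dimensional simple $p$-algebra has dimension $3$, so that the bound $\dim_F B_l<4$ coming from Lemma~\ref{remark} actually means each simple component $B_l$ is one of $F$, $D$, $D^i$, $D^{-i}$; and one uses the exhaustive case analysis of the configurations $B_{i_1}JB_{i_2}JB_{i_3}\ne 0$ and $B_{i_1}JB_{i_2}\ne 0$ — in particular the reduction of an adjacency between two two-dimensional components to one involving a component isomorphic to $F$. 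All of this is already in hand, so I expect no genuine obstacle beyond invoking these ingredients correctly; no new computation should be required.
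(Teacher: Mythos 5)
Your proposal is correct and follows essentially the same route as the paper: both halves rest on the observation that a proper subvariety of some $\V^p(C_j)$ generated by a finite dimensional algebra contains no $C_l$ (via Proposition~\ref{list cannot be reduced}) and hence has $p$-exponent at most $2$ by Theorem~\ref{characterization algebras with exponent greater than two}, while the ``only'' half of item~2 again combines that theorem with minimality. You merely phrase the uniqueness argument directly where the paper argues by contradiction, and you spell out a couple of details the paper leaves implicit; there is no substantive difference.
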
 
    \begin{proof} We prove just item $2.$ (the proof of $1.$ is similar).
    
Let $\mathcal{V}$ be a proper subvariety of $\V^{p}(C_j)$, $j=7, \ldots, 30$. Clearly $C_j \notin \mathcal{V}$. Also, by Proposition \ref{list cannot be reduced}, we get that  $C_l \notin \mathcal{V},$ for any $l=1, \ldots, 30.$
Then, from Theorem \ref{characterization algebras with exponent greater than two}, 
$\exp^{p}(\mathcal{V})\le 2$ and we are done.

Now suppose that there exists a minimal variety $\mathcal{U}$ of $p$-exponent $3$ which is not generated by any of the algebras in 2. Since $\mathcal{U}$ is minimal and its  $p$-exponent is $3$, $C_j\notin \mathcal{U},$ for any $j.$ Then by Theorem \ref{characterization algebras with exponent greater than two} we should have $\exp^{p}(\mathcal{U})\leq 2,$ a contradiction.
\end{proof}

\end{document}